\newcommand{\mft}{\mathfrak{t}}
\newcommand{\mfg}{\mathfrak{g}}
\newcommand{\mfa}{\mathfrak{a}}
\newcommand{\R}{\mathbb{R}}
\newcommand{\Z}{\mathbb{Z}}
\newcommand{\J}{\mathcal{J}}
\newcommand{\T}{\mathbb{T}}
\newcommand{\C}{\mathbb{C}}
\newcommand{\mbP}{\mathbb{P}}
\newcommand{\mcL}{\mathcal{L}}
\newcommand{\mcG}{\mathcal{G}}
\newcommand{\lb}{\llbracket}
\newcommand{\rb}{\rrbracket}
\newcommand{\mfB}{\mathfrak{B}}
\numberwithin{equation}{section}
\theoremstyle{plain}
\newtheorem{thm}{Theorem}[section]
\newtheorem{prop}[thm]{Proposition}
\theoremstyle{definition}
\newtheorem{lem}[thm]{Lemma}
\newtheorem{defn}[thm]{Definition}
\newtheorem{ex}[thm]{Example}
\newtheorem{cor}[thm]{Corollary}
\theoremstyle{remark}
\newtheorem{rem}[thm]{Remark}
\begin{document}

\title{Blow-ups in generalized K\"ahler geometry}

\author{%G.R.Cavalcanti \thanks{{\tt G.R.Cavalcanti@uu.nl}} 
J.L. van der Leer Dur\'an \thanks{{\tt j.l.vanderleerduran@uu.nl}} \\
       Department of Mathematics\\
Utrecht University\\
%P.O. Box 80010\\
%3508 TA Utrecht\\
%The Netherlands
}

\date{\vspace{-5ex}}

\maketitle

\abstract{}
\noindent We continue the study of blow-ups in generalized complex geometry with the blow-up theory for generalized K\"ahler manifolds. The natural candidates for submanifolds to be blown-up are those which are generalized Poisson for one of the two generalized complex structures and can be blown up in a generalized complex manner. We show that the bi-Hermitian structure underlying the generalized K\"ahler pair lifts to a degenerate bi-Hermitian structure on this blow-up. Then, using a deformation procedure based on potentials in K\"ahler geometry, we identify two concrete situations in which one can deform the degenerate structure on the blow-up into a non-degenerate one. We end with an investigation of generalized K\"ahler Lie groups and give a concrete example on $(S^1)^n\times (S^3)^m$, for $n+m$ even.
\vskip12pt
%MSC classification 2010: 	53D18; 53C15.\\
%Subject classification: Differential geometry.\\
%Keywords: Blow-ups, Generalized K\"ahler geometry, Bi-Hermitian Structures.	\\

\tableofcontents

\section{Introduction}

Generalized K\"ahler geometry was born in $1984$ when Gates, Hull and Ro\^cek \cite{MR776369} discovered 
new supersymmetric sigma-models in physics, whose background geometry could be relaxed from K\"ahler to generalized K\"ahler. At that time it appeared in the guise of bi-Hermitian geometry; pairs of complex structures $(I_+,I_-)$ compatible with a common metric $g$, satisfying an additional integrability equation. Bi-Hermitian geometry then gained interest from mathematicians and some theory was developed, notably in dimension four. For instance, a classification of four--manifolds carrying two compatible complex structures $(I_+,I_-)$ with $I_+\neq \pm I_-$ has been obtained in the cases where the metric is anti-self-dual (Pontecorvo \cite{MR1467652}) and where the first Betti number is even (Apostolov, Gauduchon and Grantcharov \cite{MR1702248}).
\
\newline
\newline
Around $2003$ generalized K\"ahler geometry appeared in a different formulation out of the work of Gualtieri \cite{MR2811595}, in the context of generalized complex geometry. Generalized geometry is centered on the idea of replacing the tangent bundle of a manifold by the sum of its tangent and cotangent bundle. This creates enough room for merging both complex and symplectic structures into one object called a generalized complex structure. Just as a K\"ahler manifold consists out of a complex and a symplectic structure which are compatible, a generalized K\"ahler manifold is defined by a compatible pair of generalized complex structures. Gualtieri showed that generalized K\"ahler geometry is equivalent to bi-Hermitian geometry, providing an interesting new point of view on the latter and resulting in new advances in the theory. For instance, a reduction theory became available for generalized K\"ahler structures, with applications to moduli spaces of instantons (Hitchin \cite{MR2217300}; Burzstyn, Cavalcanti, Gualtieri \cite{MR2397619}, \cite{BCGinstantons}). Another example is the deformation theorem of Goto \cite{MR2669364}, which states that on a compact manifold, a deformation of one of the two structures in a generalized K\"ahler pair can be coupled to a deformation of the second, provided the second is of a special type (``generalized Calabi-Yau"). This theorem can be applied to compact K\"ahler manifolds with a holomorphic Poisson structure, giving an important class of examples. 

Despite all these developments the study of generalized K\"ahler manifolds remains difficult, and examples where the underlying manifold does not support a K\"ahler structure are scarce. Noteworthy examples of the latter include even dimensional compact Lie groups (Gualtieri \cite{gualtieri-2010}) and some specific solvmanifolds (Fino, Tomassini \cite{MR2496412}).
\newline
\newline
\noindent In this paper we will be concerned with the blow-up theory of generalized K\"ahler manifolds. Blow-ups in generalized complex geometry were studied in \cite{MR2574746} and \cite{GCblowups}, leading to the following conclusions. There are two types of generalized complex submanifolds that are suitable for blowing up; generalized Poisson submanifolds and generalized Poisson transversals. The former look complex in transverse directions and, under an additional hypothesis on the Lie algebra induced on the conormal bundle, such submanifolds admit a (canonical) blow-up. Generalized Poisson transversals are symplectic in transverse directions and admit a global normal form, which implies the existence of a (non-canonical) blow-up. In a generalized K\"ahler manifold a generalized Poisson submanifold for one of the structures is automatically a generalized Poisson transversal for the other, just as a complex manifold of a K\"ahler manifold is automatically symplectic. The main question of this paper is then whether such submanifolds admit a generalized K\"ahler blow-up. The case of a point in a four--dimensional manifold has been investigated in \cite{MR2861778}, and we will adapt the techniques employed there to higher dimensional submanifolds. We obtain two situations where a blow-up exists. The first is when the Lie algebra structure on the conormal bundle is Abelian, which geometrically means that the exceptional divisor of the blow-up is again generalized Poisson. The second situation is when the submanifold in question is contained in a Poisson divisor for one of the two complex structures in the bi-Hermitian picture. This will be the case for instance when the structure for which $Y$ is generalized Poisson is generically symplectic. We end the paper with an explicit investigation of generalized K\"ahler structures on even dimensional compact Lie groups. We show that a maximal torus, which can be taken generalized Poisson for a suitably chosen generalized K\"ahler structure, can be blown up in a generalized complex way if and only if the Lie group equals $(S^1)^n\times (S^3)^m$, with $n+m$ even. There is then no further constraint to blow-up the generalized K\"ahler structure.  
%Unfortunately at the time of writing it is unclear if the blow-up can be constructed without making any further hypotheses beyond those imposed by the blow-up theory of generalized complex structures. 
\
\newline
\newline
\noindent \textsl{Organization:} The paper is organized as follows. In Section \ref{09:13:21} we lay out the basic ingredients from generalized complex and generalized K\"ahler geometry. For a more detailed explanation including proofs we refer to \cite{MR2811595} and \cite{gualtieri-2010}. In Section \ref{17:46} we discuss a deformation procedure for bi-Hermitian structures for which the metric is possibly degenerate. This part is based on \cite{MR2371181} and \cite{MR2861778}. Then, in Section \ref{09:19:55} we show that the bi-Hermitian structure lifts to a degenerate structure on the blow-up and, under additional hypotheses, apply the deformation procedure from Section \ref{17:46} to obtain a generalized K\"ahler blow-up. Finally, in Section \ref{09:21:55} we investigate submanifolds of generalized K\"ahler Lie groups that are suitable for blowing up.
\newline
\newline
\textsl{Acknowledgements:} The author is thankful to Gil Cavalcanti for assistence and useful conversations. 
This research was supported by a Vidi grant for NWO, the Dutch Science Foundation.

\section{Generalized geometry}
\label{09:13:21}

\subsection{Generalized complex geometry}

Let $M$ be a real $2n$-dimensional manifold and $H$ a closed real $3$--form. In generalized geometry the tangent bundle is replaced by $\T M:=TM\oplus T^\ast M$. It is endowed with a \textsl{natural pairing}
\begin{align*}
\langle X+\xi,Y+\eta	\rangle:=\frac{1}{2}(\xi(Y)+\eta(X))
\end{align*}
and a bracket on its space of sections called the \textsl{Courant bracket}:
\begin{align*}
\lb X+\xi,Y+\eta	\rb:=[X,Y]+\mcL_X(\eta)-\iota_Yd\xi-\iota_Y\iota_XH.
\end{align*}
It satisfies the Jacobi identity but is not skew-symmetric. 
\begin{defn}
A \textsl{generalized complex structure} on $(M,H)$ is a complex structure $\J$ on $\T M$ which is orthogonal with respect to $\langle, \rangle$ and whose $(+i)$--eigenbundle $L\subset\T M_\C$ is involutive\footnote{A subbundle of $\T M$ is called involutive if its space of sections is closed with respect to the Courant bracket.}.
\end{defn}
\noindent
A Lagrangian, involutive subbundle $L\subset \T M_\C$ is also called a \textsl{Dirac structure}, and generalized complex structures correspond in a one-to-one fashion with Dirac structures $L$ satisfying the additional non-degeneracy condition $L\cap \overline{L}=0$. 
\begin{ex}
\label{11:37:30}
The main examples come from complex and symplectic geometry: if $I$ is a complex structure and $\omega$ a symplectic structure, then 
\begin{align}
\J_I:=
\begin{pmatrix}
-I & 0 \\
0 & I^\ast
\end{pmatrix},
\ \ \ \ \ 
\J_\omega:=
\begin{pmatrix}
0 & -\omega^{-1} \\
\omega & 0
\end{pmatrix},
\label{13:31}
\end{align}
are generalized complex structures, with associated Dirac structures $L_I=T^{0,1}\oplus (T^\ast)^{1,0}$ and $L_\omega=(1-i\omega)T$. Another important example is provided by a holomorphic Poisson structure $(I,\sigma)$. If $Q$ is the real part of $\sigma$, then 
\begin{align}
\J_{(I,\sigma)}=
\begin{pmatrix}
-I & 4IQ \\
0 & I^\ast
\end{pmatrix}
\label{13:30}
\end{align}
is generalized complex and $L_{(I,\sigma)}=T^{0,1}\oplus (1+\sigma)(T^\ast)^{1,0}$. In these examples $H=0$.
\end{ex}
\noindent Dirac structures can conveniently be described in terms of differential forms. There is a natural action of the Clifford algebra of $(\T M,\langle, \rangle)$ on forms given by 
\begin{align*}
(X+\xi)\cdot \rho=\iota_{{}_X}\rho+\xi\wedge \rho,
\end{align*} 
giving an identification between the space of differential forms and the space of spinors for $Cl(\T M,\langle,\rangle)$. %This point of view also sheds some light on the Courant bracket, for it is derived from the operator $d^H:=d+H$ in the sense that 
%\begin{align*}
%\{\{X+\xi,d^H\},Y+\eta\}\cdot \rho=\lb X+\xi,Y+\eta \rb\cdot \rho.
%\end{align*}
%Here $\{,\}$ denotes the graded commutator on the space of operators on differential forms. 
A line subbundle $K\subset \Lambda^\bullet T^\ast M_\C$ gives rise to an isotropic subbundle $L\subset \T M_\C$ by taking the annihilator
\[
L=\{X+\xi\in \T M_\C|(X+\xi)\cdot K=0\}.
\]
This yields to a one-to-one correspondence between Dirac structures $L\subset \T M_\C$, and complex line bundles $K\subset \Lambda^\bullet T^\ast M_\C$ which satisfy the following two conditions. Firstly, $K$ has to be generated by \textsl{pure spinors}, i.e.\ forms $\rho$ which at each point $x$ admit a decomposition
\begin{align}
\rho_x=e^{B+i\omega}\wedge \Omega
\label{13:09:56}
\end{align}
where $B+i\omega$ is a $2$--form and $\Omega$ is decomposable. This condition is equivalent to $L$ being of maximal rank. Secondly, if $\rho$ is a local section of $K$ there should exist $X+\xi\in \Gamma(\T M_\C)$ with
\[d^H\rho=(X+\xi)\cdot \rho.	\]
This condition amounts to the involutivity of $L$. The condition $L\cap \bar{L}=0$ can then be expressed in spinor language using the \textsl{Chevalley pairing}: If $\rho\in \Gamma(K)$ is non-vanishing then
\begin{align*}
L\cap \bar{L}=0\Longleftrightarrow (\rho,\bar{\rho})_{{}_{Ch}}:=(\rho\wedge \bar{\rho}^T)_{top}\neq 0.
\end{align*}
Here the superscript $T$ stands for transposition, acting on a degree $l$--form by $(\beta_1 \ldots \beta_l)^T=\beta_l\ldots\beta_1$, and the subscript $top$ stands for the highest degree component. If $\rho$ is given by (\ref{13:09:56}) at a particular point $x$ then this condition becomes 
\begin{align}
\omega^{n-k}\wedge \Omega\wedge \bar{\Omega}\neq 0,
\label{14:35:04}
\end{align}
 where $k:=\text{deg}(\Omega)$. The line bundle $K$ associated to a generalized complex structure is called the \textsl{canonical line bundle}, and the integer $k$ appearing in (\ref{14:35:04}) is called the \textsl{type} at $x$. Structures of type $0$ and $n$ are called symplectic, respectively complex\footnote{Such structures are in fact equivalent to symplectic, respectively complex structures, where equivalence is defined in Definition \ref{12:24}}. Another description of the type is as follows. Every generalized complex structure naturally induces a Poisson structure given by the composition 
\begin{align}
\pi_\J:T^\ast M\hookrightarrow \T M \xrightarrow{\J} \T M \twoheadrightarrow TM.
\label{12:20}
\end{align}
The conormal bundle to the leaves, i.e.\ the kernel of $\pi_\J$, is given by the complex distribution 
\begin{align*}
\nu_\J:=T^\ast M \cap \J T^\ast M.
\end{align*}
In general $\nu_\J$ might be singular, for its complex dimension may jump in even steps from one point to the next. Intuitively, one can think of generalized complex structures as Poisson structures with transverse complex structures, and the type at a point $x$ equals the number of complex directions, i.e.\ 
\begin{align*}
\text{type}_x(\J)=\text{dim}_\C(\nu_\J)_x=\frac{1}{2}\text{corank}_\R(\pi_\J)_x.
\end{align*}
Since generalized geometry mixes covariant and contravariant tensors the notion of morphism requires some care. 
\begin{defn}
A \textsl{generalized map} between $(M_1,H_1)$ and $(M_2,H_2)$ is a pair $\Phi:=(\varphi,B)$, where $\varphi:M_1\rightarrow M_2$ is a smooth map and $B\in \Omega^2(M_1)$ satisfies $\varphi^\ast H_2=H_1+dB$. 
\end{defn}
\noindent
We will often abbreviate $(\varphi,0)$ by $\varphi$ and drop the prefix ``generalized". An important role is played by \textsl{$B$-field transformations}, maps of the form\footnote{The minus sign is chosen so that $e^B\wedge ((X+\xi)\cdot \rho)=(e^B_\ast(X+\xi))\cdot e^B\wedge\rho$.} $(Id,-B)=:e^B_\ast$. They act on $\T M$ via
\begin{align}
e^B_\ast:X+\xi\mapsto X+\xi-\iota_XB.
\label{09:25:06}
\end{align}
Given $u\in \Gamma(\T M)$ we denote by $\text{ad}(u):\Gamma(\T M)\rightarrow \Gamma(\T M)$ the adjoint action with respect to the Courant bracket. This infinitesimal symmetry has a flow, i.e.\ a family of isomorphisms $\psi_t:\T M\rightarrow \T M$ with
$%\begin{align*}
d/dt (\psi_t(v))=-\lb u ,\psi_t(v) \rb.
%\end{align*}
$
If $u=X+\xi$ and $\varphi_t$ is the flow of $X$ then
\begin{align}
\psi_t=
%\begin{pmatrix}  (\varphi_t^{-1})_\ast & 0 \\ 0 & (\varphi_t)^\ast \\ \end{pmatrix}
(\varphi_t)_\ast \circ e_\ast^{-B_t}.
\label{10:12:54}
\end{align}
where $B_t:=\int_0^t\varphi_r^\ast(d\xi+\iota_{{}_X}H)dr$. A map $\Phi=(\varphi,B)$ gives rise to a correspondence: we say that $X+\xi$ is $\Phi$-related to $Y+\eta$, and write 
$X +\xi \sim_{{}_\Phi} Y+\eta$, if 
\begin{align*}
\varphi_\ast X=Y, \ \ \xi=\varphi^\ast \eta -\iota_{{}_X}B.
\end{align*}
\begin{defn}
A map $\Phi:(M_1,H_1,\J_1)\rightarrow(M_2,H_2,\J_2)$ is called \textsl{generalized holomorphic} if for every $X+\xi\in \T M_1$ and $Y+\eta\in \T M_2$ with $X +\xi \sim_{{}_\Phi} Y+\eta$, we have $\J_1(X +\xi) \sim_{{}_\Phi}\J_2(Y+\eta)$.
%\begin{align*}
%X +\xi \underset{\Phi}{\sim} Y+\eta \ \Longrightarrow \ \J_1(X +\xi) \underset{\Phi}{\sim} \J_2(Y+\eta).
%\end{align*}
If $\Phi$ is in addition invertible we call it an \textsl{isomorphism}. 
\label{12:24}
\end{defn}
\begin{rem}
It follows immediately from the definition that $\varphi$ is a Poisson map, i.e.\ $\varphi_\ast \pi_{\J_1}=\pi_{\J_2}$. This is quite restrictive, for example if the target is symplectic then $\varphi$ has to be a submersion. In the complex category we recover the usual notion of holomorphic maps.
\label{13:34}
\end{rem} 
%If we write the generalized complex structures as
%\begin{align*}
%J_i=
%\begin{pmatrix}
%A_i & \pi_{J_i} \\
%\sigma_i & -A_i^\ast\\
%\end{pmatrix},
%\ \ \ \ A_i\in End(TM_i), \ \ \pi_{J_i}\in \Lambda^2 TM_i, \ \ \sigma_i\in \Lambda^2 T^\ast M_i,
%\end{align*}
%then $\Phi$ is holomorphic if and only if 
%\begin{align*}
%$\varphi_\ast A_1=A_2\varphi_\ast+\varphi_\ast\pi_{J_1}B$, $\varphi^\ast \sigma_2=\sigma_1+A_1^\ast B+BA_1-B\pi_1B$ and $\varphi_\ast \pi_{J_1}=\pi_{J_2}$. 
\noindent
In case $\varphi$ is a diffeomorphism a more concrete description in terms of spinors can be given. If $K_i$ is the canonical bundle for $\J_i$, $\Phi$ being an isomorphism amounts to
\begin{align*}
K_1=e^B\wedge \varphi^\ast K_2.
\end{align*}
\noindent
We can now state the analogue of the Newlander-Nirenberg and Darboux Theorems in generalized complex geometry.
\begin{thm}[\cite{MR3128977}] Let $(M,H,\J)$ be a generalized complex manifold. If $x\in M$ is a point where $\J$ has type $k$, then a neighborhood of $x$ is isomorphic to a neighborhood of $(0,0)$ in 
\begin{align}
(\R^{2n-2k},\omega_{st}) \times (\C^k,\sigma):=(\R^{2n-2k}\times \C^k, \J_{\omega_{st}}\times \J_{(i,\sigma)})
\label{13:19}
\end{align} 
where $\omega_{st}$ is the standard symplectic form, $\sigma$ is a holomorphic Poisson structure which vanishes at $0$, and $\J_{\omega_{st}}$ and $\J_{(i,\sigma)}$ are defined in Example \ref{11:37:30}. 
\label{16:12:53}
\end{thm}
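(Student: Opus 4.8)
This is the generalized Darboux/Newlander--Nirenberg theorem of \cite{MR3128977}; I outline the proof strategy. The plan is to reduce to $H=0$, then split off the symplectic directions of $\pi_\J$, and finally dispose of the remaining factor --- which is forced to be of complex type --- by an analytic argument. For the first reduction: since $H$ is closed we may write $H=dB$ near $x$, and the $B$-field transformation $e^B_\ast$ is, by Definition \ref{12:24}, an isomorphism onto a generalized complex manifold carrying the zero three-form. So from now on I assume $H=0$.

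Next I would use the Poisson structure $\pi_\J$ of (\ref{12:20}), whose rank at $x$ is $2n-2k$ by the description of the type. Weinstein's splitting theorem provides a Poisson-diffeomorphism from a neighborhood of $x$ onto $(\R^{2n-2k},\pi_{st})\times(N,\pi_N)$ with $\dim N=2k$ and $\pi_N$ vanishing at the image of $x$. The point is then to promote this to an isomorphism of generalized complex manifolds: since $\pi_\J$ is non-degenerate along the first factor, a Moser-type argument together with a further $B$-field transformation should bring $\J$ into the product form $\J_{\omega_{st}}\times\J_N$, where $\J_N$ is a generalized complex structure on $N$ inducing $\pi_N$. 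In particular $\J_N$ has type $\dim_\C N$ at the marked point, i.e.\ it is of complex type there, and the problem is reduced to the case in which $\pi_\J$ vanishes at $x$.

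In that case the $(+i)$-eigenbundle $L$ agrees at $x$ with $T^{0,1}\oplus(T^\ast)^{1,0}$ for a constant complex structure, so $\J$ is a small perturbation of that model. The goal becomes to show that, near $x$, $\J$ is a $B$-field transform of the generalized complex structure $\J_{(I,\sigma)}$ of (\ref{13:30}) attached to some holomorphic Poisson structure $(I,\sigma)$ with $\sigma(x)=0$. Writing a local generating pure spinor $\rho$ of the canonical bundle and imposing the integrability equation $d\rho=(X+\xi)\cdot\rho$ turns this into a nonlinear first-order PDE for the unknown (complex structure, $B$-field), whose linearization at the model is a $\bar\partial$-type operator. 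The crux is that, because $\pi_\J$ degenerates at $x$, any right inverse of this operator loses derivatives in the directions transverse to the symplectic foliation, so the ordinary implicit function theorem is unavailable. The remedy is to endow the relevant spaces of sections with tame Fr\'echet structures, build smoothing operators, establish tame estimates for the family of linearized operators and their inverses, and conclude via the Nash--Moser inverse function theorem; this produces $(I,\sigma)$.

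Composing the $B$-field transformation of the first step, the Weinstein splitting and $B$-field transformation of the second, and the diffeomorphism and $B$-field transformation supplied by the analytic argument on the factor $N\cong\C^k$, one obtains the asserted isomorphism with a neighborhood of $(0,0)$ in $(\R^{2n-2k},\omega_{st})\times(\C^k,\sigma)$, with $\sigma$ holomorphic Poisson vanishing at $0$. I expect the third step to be the genuine obstacle: the loss of derivatives across the locus where the type jumps is real, and overcoming it is exactly what necessitates the Nash--Moser machinery rather than a softer deformation argument.
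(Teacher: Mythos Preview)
The paper does not give its own proof of this theorem: it is stated as a background result and attributed to \cite{MR3128977}, with no argument supplied. So there is nothing in the paper to compare your proposal against.

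That said, your outline is a reasonable sketch of the strategy behind the cited result, with one caveat about the order of operations. In Bailey's argument the Nash--Moser step is not applied only to a residual ``purely complex'' factor obtained after a Weinstein-type splitting; rather, the analytic machinery is used to show directly that \emph{any} generalized complex structure near a point is equivalent, via diffeomorphism and $B$-field, to one of the form $\J_{(I,\sigma)}$ for some holomorphic Poisson structure $(I,\sigma)$, with no a priori vanishing condition on $\sigma$. The product normal form (\ref{13:19}) with $\sigma(0)=0$ then follows by applying the Weinstein splitting to the \emph{holomorphic} Poisson structure $\sigma$ (which respects $I$ and hence the generalized complex structure). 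Your step~2, ``promote the real Weinstein splitting of $\pi_\J$ to a generalized complex isomorphism by a Moser argument,'' is precisely the place where this reordering matters: the Moser/Abouzaid--Boyarchenko splitting for generalized complex structures gives a product with a GC structure of maximal type at the point, but identifying that factor with $\J_{(i,\sigma)}$ is still the hard analytic step. So the Nash--Moser input is not optional even after one has split off the symplectic leaf; your third paragraph correctly identifies it as the genuine obstacle.
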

\noindent To blow up submanifolds we need an appropriate notion of generalized complex submanifold. The definition that we will use generalizes complex and symplectic submanifolds, as those are the submanifolds that are known to admit a blow up in their respective categories (there is for instance no blow-up available for (co-)isotropic submanifolds of a symplectic manifold). Let $\Phi=(\varphi,B)$ be a map and $L_2$ a Dirac structure on $(M_2,H_2)$. We define the \textsl{backward image} of $L_2$ along $\Phi$ by
\begin{align}
\label{12:18:32}
\mfB \Phi(L_2):=\{X+\varphi^\ast\xi-\iota_{{}_X}B | \ \varphi_\ast X+\xi\in L_2\}.
\end{align}
This is a Dirac structure on $(M_1,H_1)$, provided it is a smooth vector bundle. A sufficient condition for this is that $ker(d\varphi^\ast)\cap \varphi^\ast L$ is of constant rank. More information can be found in \cite{MR3098084}. 
%Similarly, the \textsl{forward image} of a Dirac structure $L_1$ on $(M_1,H_1)$ is given by 
%\begin{align*}
%\mfF\Phi(L_1):=\{ \varphi_\ast X+\xi | \ X+\varphi^\ast \xi -\iota_XB\in L_1		\}.
%\end{align*}
%This will be smooth if $ker(\varphi_\ast)\cap e^{-B}_\ast L$ has constant rank. In case $\varphi$ is invertible we have $\mfF\Phi(L)=\varphi_\ast (e^{-B}_\ast (L))$. 
\begin{defn}
A \textsl{generalized complex submanifold} is a submanifold $i:Y\hookrightarrow (M,H,\J)$ such that $\mfB i(L)$ is generalized complex, i.e.\ is smooth and $\mfB i(L)\cap \overline{\mfB i(L)}=0$. 
\label{20:22}
\end{defn}
\begin{rem}
A sufficient condition for smoothness is that $N^\ast Y\cap \J N^\ast Y$ is of constant rank. Moreover, the second condition is equivalent to $\J N^\ast Y \cap (N^\ast Y)^\perp \subset N^\ast Y$. In complex or symplectic manifolds we recover the usual notion of complex respectively symplectic submanifolds. Also, a point is always a generalized complex submanifold. Note that if $M$ is symplectic, $i$ is generalized holomorphic if and only if $Y$ is an open subset.  
\end{rem}

%---------------

%Remarks about submanifolds and backward images:

%Example of submanifold for which $\mfB i(L)$ is smooth but does not satisfy that $N^\ast Y\cap \J N^\ast Y$ is of constant rank: $M=\C^3$ with coordinates $(u,z,w)$ and GCS $\rho=du(z+dzdw)$. Define $C=\{w=0,u-z=0\}$. Then C is a generalized Poisson transversal at $z\neq 0$ but is a generalized Poisson submanifold at $(0,0,0)$. Nevertheless, $\mfB i(L)$ is given by the complex structure on $C$ viewed as complex submanifold of $\C^3$ and so is smooth.

%Example of submanifold for which $\mfB i(L)$ is non-degenerate but not smooth: $M:=\C^2$ with coordinates $(z,w)$ and GCS $\rho=z+dzdw+zdzd\bar{z}$ and $C:=\{w=0\}$. On $C\cap \{ z\neq 0\}$ the backward image is generated by $i^\ast \rho=z(1+dzd\bar{z})$ hence is GC of symplectic type. In fact, $\mfB i(L)=\langle \partial_{z}-d\bar{z},\partial_{\bar{z}}+dz\rangle$ there. At $z=0$ however, $\mfB i(L)$ is the standard complex structure on $C$, i.e. $\mfB i(L)=\langle \partial{\bar{z}},dz	\rangle $.

%--------------

\subsection{Generalized K\"ahler geometry}
Generalized complex structures were introduced with the purpose of unifying complex and symplectic structures into one framework. On a K\"ahler manifold we have both a complex and a symplectic structure which are compatible with each other. Here is the generalized version.
\begin{defn}
A \textsl{generalized K\"ahler structure} on $M$ is a pair of commuting generalized complex structures $(\J_1,\J_2)$, such that $\mcG:=-\J_1\J_2$ defines a \textsl{generalized metric}, i.e.\  
\begin{align*}
(u,v)\mapsto \langle \mcG u,v	 \rangle=\langle \J_1u, \J_2v	  \rangle
\end{align*}
is a positive definite metric on $\T M$.  
\end{defn}
\begin{ex} \label{10:47:25}
A natural example is given by an ordinary K\"ahler manifold $(M,I,\omega)$. Define $\J_1:=\J_I$ and $\J_2:=-\J_\omega$, which commute because $I$ is compatible with $\omega$, and 
\begin{align*}
\mcG=\begin{pmatrix} 0 & g^{-1} \\ g & 0 \end{pmatrix}
\end{align*}
is indeed positive, where $g:=-\omega I$ is the associated K\"ahler metric. 
\end{ex}
\noindent Since $\mcG^2=1$, $\T M$ decomposes into its $(\pm 1)$--eigenspaces $V_+$ and $V_-$, on which the pairing restricts to a positive, respectively negative definite form. So choosing a generalized metric  amounts to choosing a reduction of structure groups for $\T M$ from $O(2n,2n)$ to $O(2n)\times O(2n)$. The restriction of $\J_1$ induces a complex structure on $V_\pm$, leading to decompositions
\begin{align*}
(V_\pm)_\C=V^{1,0}_\pm\oplus V^{0,1}_\pm.
\end{align*}
As $\J_2$ equals $\pm \J_1$ on $V_\pm$, we obtain
\begin{align}
L_1=V^{1,0}_+\oplus V^{1,0}_-, \hspace{15mm} L_2=V^{1,0}_+\oplus V^{0,1}_-. 
\label{11:55}
\end{align}
\noindent Because $V_\pm$ does not intersect the isotropics $TM$ and $T^\ast M$, the projection $V_\pm\rightarrow TM$ is an isomorphism and we can write $V_\pm$ as the graph of a map $a_\pm:TM\rightarrow T^\ast M$. If we decompose $a_+=g+b$ where $g$ is symmetric and $b$ is skew, then positivity of $\langle , \rangle|_{V_+} $ implies that $g$ is positive definite, while orthogonality of $V_+$ and $V_-$ implies $a_-=-g+b$. Transporting the complex structure given by $\J_1$ on $V_\pm$ to $TM$ we obtain two almost complex structures $I_\pm$ on $M$, both compatible with $g$. Such a tuple $(g,b,I_+,I_-)$ will be called an almost\footnote{The adjective ``almost" refers to a structure without assuming any integrability conditions. The appropriate integrability conditions in this case are given by Proposition \ref{11:28:39} $ii)$.} \textsl{bi-Hermitian structure}. The above construction can then be reversed, giving a bijection between almost bi-Hermitian structures $(g,b,I_+,I_-)$ and almost generalized K\"ahler structures $(\J_1,\J_2)$. Integrability can be expressed as follows. 
\begin{prop}[{\cite[Theorem 6.28]{gualtieri-2003}}] 
Let $(\J_1,\J_2)$ be an almost generalized K\"ahler structure and $(g,b,I_+,I_-)$ the associated almost bi-Hermitian structure. Then the following are equivalent:
\begin{itemize}
\item[i)] $(\J_1,\J_2)$ is generalized K\"ahler.
\item[ii)] $I_\pm$ are both integrable and $\pm d^c_\pm\omega_\pm=H-db$, where $\omega_\pm=gI_\pm$ and $d^c_\pm=i(\bar{\partial}_\pm-\partial_\pm)$.
\item[iii)] $I_\pm$ are both integrable and $\nabla^\pm I_\pm =0$, where $\nabla^\pm:=\nabla\mp\frac{1}{2}g^{-1}(H-db)$ and $\nabla$ is the Levi-Cevita connection associated to $g$. 
\end{itemize}
\label{11:28:39} 
\end{prop}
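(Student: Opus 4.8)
The plan is to prove the two equivalences i) $\Leftrightarrow$ ii) and ii) $\Leftrightarrow$ iii) separately: the first by unwinding Courant-involutivity of the eigenbundles in terms of the bi-Hermitian data, the second as a statement about the Bismut connection. I would begin by reducing i) to involutivity of the two bundles $C_\pm := V_\pm^{1,0}$. By (\ref{11:55}) we have $L_1 = C_+ \oplus C_-$ and $L_2 = C_+ \oplus \overline{C}_-$, so that $C_+ = L_1\cap L_2$ and $C_- = L_1\cap\overline{L}_2$. If $\J_1$ and $\J_2$ are both integrable, then $C_+$ and $C_-$ are Courant-involutive, being intersections of involutive subbundles of constant rank. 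Conversely, if $C_+$ and $C_-$ are both involutive then so are $\overline{C}_+$ and $\overline{C}_-$ (the Courant bracket is real), and using the orthogonal splitting $\T M_\C = C_+ \oplus \overline{C}_+ \oplus C_- \oplus \overline{C}_-$ together with the compatibility of $\lb\cdot,\cdot\rb$ with $\langle\cdot,\cdot\rangle$, one checks that the mixed brackets $\lb C_+,C_-\rb$ and $\lb C_+,\overline{C}_-\rb$ stay inside $L_1$, resp. $L_2$; hence $\J_1$ and $\J_2$ are both integrable. So i) holds if and only if $C_+$ and $C_-$ are each involutive.

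For i) $\Leftrightarrow$ ii), I would parametrize $C_+ = \{X + (g+b)(X) : X \in \Gamma(T^{1,0}_{I_+})\}$ and $C_- = \{X + (-g+b)(X) : X \in \Gamma(T^{1,0}_{I_-})\}$, with $g$ and $b$ extended complex-bilinearly. For two sections $X + (g+b)X$ and $Y + (g+b)Y$ of $C_+$, the $TM$-part of their Courant bracket is $[X,Y]$, so involutivity first forces $[X,Y]\in\Gamma(T^{1,0}_{I_+})$, which by Newlander--Nirenberg is integrability of $I_+$. Granting this, the remaining requirement is that the $T^\ast M$-part $\mcL_X((g+b)Y) - \iota_Y d((g+b)X) - \iota_Y\iota_X H$ equals $(g+b)([X,Y])$; expanding and sorting the terms by $I_+$-bidegree, and using that $H-db$, $db$ and $d^c_+\omega_+$ are all \emph{real} $3$-forms (hence determined by the components the contractions detect), this identity is equivalent to $-d^c_+\omega_+ = H - db$ with $\omega_+ = gI_+$. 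The same computation for $C_-$, replacing $g$ by $-g$ and $I_+$ by $I_-$, gives integrability of $I_-$ and $d^c_-\omega_- = H - db$. Put together, these are exactly condition ii).

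For ii) $\Leftrightarrow$ iii), which is pointwise once $I_\pm$ are integrable and $g$-compatible, I would invoke the Bismut connection: for any $3$-form $\tau$ the connection $\nabla + \tfrac12 g^{-1}\tau$ is metric with totally skew torsion $\tau$, and by uniqueness of the Bismut connection of a Hermitian manifold it preserves $I_\pm$ if and only if $\tau$ is the Bismut torsion of $(g,I_\pm)$, which for integrable $I_\pm$ is $-d^c_\pm\omega_\pm$. Applying this with $\tau = \mp(H-db)$ shows that $\nabla^\pm I_\pm = 0$ holds if and only if $\mp(H-db) = -d^c_\pm\omega_\pm$, that is, if and only if $\pm d^c_\pm\omega_\pm = H-db$, which is ii).

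The main obstacle is the bidegree bookkeeping in the step i) $\Leftrightarrow$ ii): expanding the cotangent part of the Courant bracket of two $I_+$-holomorphic vectors, organizing all terms by Dolbeault bidegree, getting the signs right, and verifying that the resulting identity on $T^{1,0}_{I_+}\times T^{1,0}_{I_+}$ is genuinely equivalent to the full real identity $H - db = -d^c_+\omega_+$ rather than to a weaker fragment of it — this is where the interplay among $g$, $b$ and $H$ all has to be handled at once. The propagation of involutivity from $C_\pm$ to $L_1, L_2$ in the reduction step is the other point needing a little care, but it follows from the formal properties of the Courant bracket.
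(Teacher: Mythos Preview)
The paper does not supply its own proof of this proposition; it simply cites Gualtieri's thesis. Your outline is essentially the argument given there: reduce integrability of $(\J_1,\J_2)$ to Courant-involutivity of $V_\pm^{1,0}$ via (\ref{11:55}), parametrize these as graphs of $\pm g+b$ over $T^{1,0}_\pm$, read off integrability of $I_\pm$ from the tangent component and the torsion condition from the cotangent component, and then identify $\nabla^\pm$ with the Bismut connections for ii) $\Leftrightarrow$ iii). So as a strategy this is exactly right.

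Two remarks. First, your signs are swapped relative to the statement: you conclude $-d^c_+\omega_+=H-db$ and $d^c_-\omega_-=H-db$, whereas the proposition asserts $+d^c_+\omega_+=H-db$ and $-d^c_-\omega_-=H-db$. With the paper's convention $d^c=i(\bar\partial-\partial)$ and $\omega_\pm=gI_\pm$, the cotangent computation for $C_+$ really does produce the $+$ sign, so you should re-check that step (the slip is easy to make when passing between $g$ and $\omega_+$ on $(1,0)$-vectors). Second, the ``propagation'' step---that involutivity of $C_+$ and $C_-$ separately forces $\lb C_+,C_-\rb\subset L_1$---does go through, but it needs slightly more than a wave at ``formal properties'': one uses $\lb u,v\rb+\lb v,u\rb=d\langle u,v\rangle=0$ for $u\in C_+$, $v\in C_-$, together with the Leibniz rule $\pi_T(u)\langle v,w\rangle=\langle\lb u,v\rb,w\rangle+\langle v,\lb u,w\rb\rangle$ applied with $w$ ranging over $C_+$ and $C_-$ in turn. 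It would strengthen the write-up to spell this out in two lines.
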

\noindent An explicit relation between $(\J_1,\J_2)$ and $(g,b,I_\pm)$ is given by
\begin{align}
\J_1&=\frac{1}{2}
\begin{pmatrix}
1 & 0 \\ 
b & 1 \\
\end{pmatrix}
\begin{pmatrix}
I_++I_- & 	-(\omega^{-1}_+-\omega^{-1}_-)	\\
\omega_+-\omega_-& -(I^\ast_++I^\ast_-)
\end{pmatrix}
\begin{pmatrix}
1 & 0 \\ 
-b & 1 \\
\end{pmatrix},
\nonumber
\\
\J_2&=\frac{1}{2}
\begin{pmatrix}
1 & 0 \\ 
b & 1 \\
\end{pmatrix}
\begin{pmatrix}
I_+-I_- & 	-(\omega^{-1}_++\omega^{-1}_-)	\\
\omega_++\omega_-& -(I^\ast_+-I^\ast_-)
\end{pmatrix}
\begin{pmatrix}
1 & 0 \\ 
-b & 1 \\
\end{pmatrix}.
\label{10:42:52}
\end{align}
From this we see that 
\begin{align}
\label{09:05} \pi_{\J_1}=-\frac{1}{2}(\omega_+^{-1}-\omega_-^{-1}), \ \ \ \ \ 
\pi_{\J_2}=-\frac{1}{2}(\omega_+^{-1}+\omega_-^{-1}).
\end{align}
\noindent It follows that $\pi_{\J_1} + \pi_{\J_2}$ is invertible and, using a little bit of linear algebra, this implies that 
\begin{align*}
\text{type}(\J_1)+\text{type}(\J_2)\leq n.
\end{align*}
One can also relate the parity of the types of $\J_1$ and $\J_2$ to the orientations of $I_\pm$. In general, on a $2n$--dimensional manifold, $\text{type}(\J_1)=n \ \text{mod} \ 2$ if and only if $I_+$ and $I_-$ induce the same orientation, while $\text{type}(\J_2)= n \ \text{mod} \ 2$ if and only if $I_+$ and $-I_-$ induce the same orientations. 
%-----------
%In fact, $type(\J)=k$ implies that the orientation induced by $\J$ on $\T M$ differs with the canonical orientation by a factor of $(-1)^{n-k}$. On the other hand, in terms of $I_\pm$, the induced orientation by $\J_1$ on $\T M$ is agrees with the canonical one if and only if $I_\pm$ induce the same orientations on $TM$. Of course for $\J_2$ the same conclusion holds with $I_+$ and $I_-$. 
%----------
\begin{rem} 
\
\begin{itemize}
\item[1] If $(\J_1,\J_2)$ is generalized K\"ahler then so is $(\J_2,\J_1)$, with the same generalized metric $\mcG$. So when considering e.g. a generalized Poisson submanifold (c.f. Section \ref{11:06:22}) for one of the two structures, we may as well assume this to be $\J_1$.
\item[2] We can always gauge away the two-form $b$ by a transformation of the form (\ref{09:25:06}), at the expense of modifying $H$ by $H-db$. In what follows we will often implicitly assume this has been done, and we will refer to the tuple $(g,I_\pm,H)$ as the bi-Hermitian structure.
\end{itemize}
\end{rem}
\noindent The difficult feature of bi-Hermitian geometry lies in the fact that $I_+$ and $I_-$ do not commute in general. Therefore, standard techniques in complex geometry, such as decomposition of forms into types, become difficult as they can be performed only for one of the two complex structures at a time. This failure of commutativity suggests that important information about the generalized K\"ahler structure is contained in the tensor 
\begin{align}
Q:=-\frac{1}{2}[I_+,I_-]g^{-1}:T^\ast M \rightarrow TM.
\label{09:06}
\end{align}
Since $Q$ is skew-symmetric we can regard it as a bivector, and it was observed in \cite{MR1702248} in the $4$--dimensional case and in \cite{MR2217300} in the general case, that $Q$ is Poisson. In fact, it turns out to be the real part of two holomorphic Poisson structures  
\begin{align}
\label{15:18:13}
\sigma_\pm:=Q-iI_\pm Q.
\end{align}
One can prove this directly in local coordinates using the integrability conditions (see \cite{MR2217300}), or in the following more abstract way (see \cite{gualtieri-2010}). If $L_1$ is a Dirac structure on $(M,H_1)$ and $L_2$ a Dirac structure on $(M,H_2)$, we can form their \textsl{Baer-sum} on $(M,H_1+H_2)$ via
\begin{align*}
L_1\boxtimes L_2:=\mfB i(L_1\times L_2)=\{X+\xi+\eta | X+\xi\in L_1, X+\eta\in L_2 	\}, 
\end{align*}
where $i=(i,0):(M,H_1+H_2) \rightarrow (M,H_1) \times (M,H_2)$ denotes the diagonal map, and the backward image is defined in (\ref{12:18:32}). A sufficient condition for $L_1\boxtimes L_2$ to be smooth is that $L_1\cap L_2\cap T^\ast M$ is of constant rank. Note that there is a natural map $L_1\times_{TM} L_2 \rightarrow L_1\boxtimes L_2$ given by $(X+\xi,X+\eta)\mapsto X+\xi+\eta$, which is an isomorphism if and only if $L_1\cap L_2\cap T^\ast M=0$. The latter condition also ensures that for spinors $\rho_1$, $\rho_2$ for $L_1$, $L_2$, the product $\rho_1\wedge \rho_2$ does not vanish and forms a spinor for $L_1\boxtimes L_2$. 

Observe that $TM$, considered as Dirac structure on $(M,0)$, acts as a two-sided identity with respect to the Baer-sum operation, i.e.\ $L\boxtimes TM=L=TM\boxtimes L$. There is also an inverse for each Dirac structure $L$ on $(M,H)$, given by $L^T:=\{ X+\xi | X-\xi\in L\}$ on $(M,-H)$, which satisfies $L\boxtimes L^T=L^T\boxtimes L=TM$. If $L$ comes from a generalized complex structure, a quick computation shows that  
\begin{align*}
L\boxtimes \bar{L}^T=\{-\frac{i}{2}\pi_{\J}(\xi)+\xi| \ \xi\in T^\ast M_\C\}=\text{graph}(-\frac{i}{2}\pi_{\J}).
\end{align*}
This is one way to see that $\pi_{\J}$ is integrable. In a similar spirit we have the following proposition which follows from the results in \cite{gualtieri-2010}, and we give a proof for completeness.  
\begin{prop}
Let $(\J_1,\J_2)$ be a generalized K\"ahler structure with associated Dirac structures $L_1$, $L_2$, and let $(g,I_\pm,H)$ be the corresponding bi-Hermitian structure. Then 
\begin{align*}
\bar{L}^T_1\boxtimes \overline{L_2}=L_{(I_+,-\frac{1}{8}\sigma_+)}, \hspace{15mm} \bar{L}^T_1\boxtimes L_2=L_{(I_-,-\frac{1}{8}\sigma_-)}, 
\end{align*}
where $L_{(I_\pm,-\frac{1}{8}\sigma_\pm)}$ are defined in Example \ref{11:37:30}. In particular, $\sigma_\pm$ are both holomorphic Poisson. 
\end{prop}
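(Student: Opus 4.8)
The plan is to evaluate both Baer sums pointwise by matching tangent components. By the second part of the Remark we may assume $b=0$, so that $V_\pm=\{X\pm gX:X\in TM\}$ and $\J_1|_{V_\pm}$ corresponds, under $V_\pm\xrightarrow{\sim}TM$, to $I_\pm$. Write $T^{1,0}_\pm,T^{0,1}_\pm\subset TM_\C$ for the $(+i)$- and $(-i)$-eigenbundles of $I_\pm$, and $v^{1,0}_\pm,v^{0,1}_\pm$ for the components of $v\in TM_\C$ in these (and similarly $(T^\ast)^{1,0}_\pm$, $(T^\ast)^{0,1}_\pm$). Then $V^{1,0}_\pm=\{X\pm gX:X\in T^{1,0}_\pm\}$, so (\ref{11:55}) reads
\begin{align*}
L_1&=\{X+gX:X\in T^{1,0}_+\}\oplus\{Y-gY:Y\in T^{1,0}_-\}, & L_2&=\{X+gX:X\in T^{1,0}_+\}\oplus\{Y-gY:Y\in T^{0,1}_-\}.
\end{align*}
As $g$ is real, conjugation interchanges $T^{1,0}_\pm$ with $T^{0,1}_\pm$ and transposition $X+\xi\mapsto X-\xi$ flips only the covector part, so $\bar L_1^T=\{X-gX:X\in T^{0,1}_+\}\oplus\{Y+gY:Y\in T^{0,1}_-\}$ and $\overline{L_2}=\{X+gX:X\in T^{0,1}_+\}\oplus\{Y-gY:Y\in T^{1,0}_-\}$.

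Next I would record two elementary facts. First, since $g=-\omega_\pm I_\pm$ and $\omega_\pm$ has type $(1,1)$ for $I_\pm$, the map $g$ restricts to an isomorphism $T^{0,1}_\pm\xrightarrow{\sim}(T^\ast)^{1,0}_\pm$. Second, for any $\alpha\in T^\ast M_\C$ one has $\sigma_+(\alpha)=Q\alpha-iI_+Q\alpha=2(Q\alpha)^{1,0}_+$, and likewise for $\sigma_-$. Now compute $\bar L_1^T\boxtimes\overline{L_2}$ straight from the definition of the Baer sum: an element is $W+\xi+\eta$ with $W+\xi\in\bar L_1^T$ and $W+\eta\in\overline{L_2}$. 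Parametrizing $W+\xi=(a-ga)+(b+gb)$ with $a\in T^{0,1}_+$, $b\in T^{0,1}_-$, and $W+\eta=(c+gc)+(d-gd)$ with $c\in T^{0,1}_+$, $d\in T^{1,0}_-$, the condition $a+b=W=c+d$ forces $a-c=d-b=:e\in T^{0,1}_+$; hence $d=e^{1,0}_-$, $b=-e^{0,1}_-$, while $a\in T^{0,1}_+$ stays free and $c=a-e$. One computes $\xi+\eta=-2ge=:\alpha$, which by the first fact runs over all of $(T^\ast)^{1,0}_+$, and $W=a-e^{0,1}_-$, so $W\equiv-(e^{0,1}_-)^{1,0}_+$ modulo $T^{0,1}_+$.

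The crux is the identity $(e^{0,1}_-)^{1,0}_+=\tfrac14(Q\alpha)^{1,0}_+$. Since $e=-\tfrac12 g^{-1}\alpha$ and $Q=-\tfrac12[I_+,I_-]g^{-1}$ we get $Q\alpha=[I_+,I_-]e$; expanding both sides into $I_+$- and $I_-$-components and using $I_+e=-ie$ collapses each to $\tfrac14(I_+I_-e+iI_-e)$. Combined with the second fact this reads $W\equiv-\tfrac18\sigma_+(\alpha)$ modulo $T^{0,1}_+$, i.e. $\bar L_1^T\boxtimes\overline{L_2}=T^{0,1}_+\oplus(1-\tfrac18\sigma_+)(T^\ast)^{1,0}_+=L_{(I_+,-\frac18\sigma_+)}$. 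The identity $\bar L_1^T\boxtimes L_2=L_{(I_-,-\frac18\sigma_-)}$ follows from the same computation with $T^{0,1}_-$ replacing $T^{1,0}_-$ inside $L_2$: now $e\in T^{0,1}_-$, the $I_+$-data $a,c$ is determined by $e$ and the $I_-$-datum $d$ is free, and the analogous bracket identity produces $-\tfrac18\sigma_-$. Finally, the two subbundles just produced are smooth, hence honest Dirac structures (being backward images of Dirac structures along the diagonal), and as $I_\pm$ are already integrable by Proposition \ref{11:28:39}, the involutivity built into a Dirac structure of the form $L_{(I_\pm,-\frac18\sigma_\pm)}$ is precisely the statement that $\sigma_\pm$ are holomorphic Poisson.

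I expect the real work to be the bracket bookkeeping in the crux step: juggling the two non-commuting complex structures and reproducing $[I_+,I_-]$, hence $Q$ and $\sigma_\pm$, on the nose. A minor additional point is to check that the tangent and cotangent projections of the two Baer sums surject onto $T^{0,1}_\pm$ and $(T^\ast)^{1,0}_\pm$ respectively, and that the Baer sums are smooth — the latter being automatic once the answer has been identified. (Alternatively, one could first use $L\boxtimes\bar L^T=\mathrm{graph}(-\tfrac i2\pi_{\J})$ together with (\ref{09:05}) to rewrite the Baer sums in terms of $L_1^T$, $L_2$ and a single graph, but evaluating those still requires the same kind of matching, so little is saved.)
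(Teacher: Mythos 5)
Your proof is correct and takes essentially the same route as the paper: both arguments are pointwise linear algebra based on writing $L_1=V^{1,0}_+\oplus V^{1,0}_-$, $L_2=V^{1,0}_+\oplus V^{0,1}_-$ with $V_\pm$ the graphs of $\pm g$, and your crux identity $(e^{0,1}_-)^{1,0}_+=\tfrac14\big([I_+,I_-]e\big)^{1,0}_+$ is just the transposed form of the paper's formula $\sigma_+=4g^{-1}\overline{P}_+\overline{P}_-P_+$. The only difference is organizational: you parametrize the full Baer sum and read off equality directly, whereas the paper exhibits the inclusion $L_{(I_+,-\frac18\sigma_+)}\subset\bar{L}^T_1\boxtimes\overline{L_2}$ and concludes equality by a dimension count.
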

\begin{proof}
We will only show that $L_{(I_+,-\frac{1}{8}\sigma_+)}\subset \bar{L}^T_1\boxtimes \overline{L_2}$; equality then follows from dimensional reasons and the case of $\sigma_-$ is similar. We have
\begin{align*}
L_{(I_+,-\frac{1}{8}\sigma_+)}=\{X+\sigma_+(\xi)-8\xi | \ X\in T^{0,1}_+M, \xi \in T_+^{\ast 1,0}M	\},
\end{align*}
where $T^{1,0}_+M$ denotes $(+i)$--eigenspace for $I_+$. We can write $X=X-g(X)+g(X)$ 
and since $X+g(X)\in V_+^{0,1}=\overline{L}_1\cap \overline{L}_2$, we see that $X\in \bar{L}^T_1\boxtimes \overline{L_2}$. Next, let us denote by $P_\pm:=\frac{1}{2}(1-iI_\pm)$ the projections onto $T_\pm^{1,0}M$. A quick calculation yields 
\begin{align*}
\sigma_+=4g^{-1}\overline{P}_+\overline{P}_-P_+.
\end{align*} 
Using this we obtain, for $\xi\in T_+^{\ast 1,0}M$,  
\begin{align}
\label{14:18:26} \sigma_+(\xi)&=4g^{-1}(\xi-P_+\bar{P}_-\xi) -4g^{-1}(P_-\xi)\\
\label{14:18:37}&=-4g^{-1}(P_+\bar{P}_-\xi)+4g^{-1}(\bar{P}_-\xi).
\end{align}
Then (\ref{14:18:26}) and (\ref{14:18:37}) are decompositions of $\sigma_+(\xi)$ in $T^{0,1}_+M+T^{0,1}_-M$ and $T^{0,1}_+M+T^{1,0}_-M$ respectively. Writing $\zeta:=4(\xi-P_+\bar{P}_-\xi+P_-\xi)$ and $\eta:=-4P_+\bar{P}_-\xi-4\bar{P}_-\xi$ we have
\begin{align*}
\sigma_+(\xi)-8\xi =\sigma_+(\xi)-\zeta+\eta.
\end{align*} 
Equation (\ref{14:18:26}) implies that $\sigma_+(\xi)+\zeta \in \overline{L}_1$ while (\ref{14:18:37}) implies that $\sigma_+(\xi)+\eta\in \overline{L}_2$. In particular $\sigma_+(\xi)-8\xi\in \bar{L}_1^T\boxtimes \overline{L}_2$, so indeed  $L_{(I_+,-\frac{1}{8}\sigma_+)}\subset \bar{L}^T_1\boxtimes \overline{L_2}$.
\end{proof}
\noindent The fact that $\bar{L}^T_1\boxtimes \overline{L_2}$ is smooth can also be seen directly from $\bar{L}_1^T\cap \overline{L}_2\cap T^\ast M=V^{0,1}_+\cap T^\ast M=0$, which in addition shows that 
\begin{align}
\bar{\rho}_1^T\wedge \bar{\rho}_2=e^{-\frac{1}{8}\sigma_+}\Omega_+
\label{09:29}
\end{align}
where $\Omega$ is a suitably scaled $(n,0)$--form for $I_+$. Similarly,  
\begin{align}
\bar{\rho}_1^T\wedge \rho_2=e^{-\frac{1}{8}\sigma_-}\Omega_-.
\end{align}
We conclude this section with a bit of linear algebra that will be needed later.
\begin{lem}\label{14:37:27}
$\text{ker}([I_+,I_-])=\text{ker}(I_++I_-)\oplus\text{ker}(I_+-I_-)$.
\end{lem}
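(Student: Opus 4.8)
The plan is to reduce the statement to pointwise linear algebra: fix a real vector space $V$ carrying two complex structures $I_\pm$ (so $I_\pm^2=-1$), and set $W:=\ker([I_+,I_-])$. First I would dispose of the inclusion ``$\supseteq$'' together with the directness of the sum. If $(I_++I_-)v=0$, then $I_-v=-I_+v$, so $I_+I_-v=-I_+^2v=v$ and $I_-I_+v=-I_-^2v=v$, hence $[I_+,I_-]v=0$; symmetrically, $(I_+-I_-)v=0$ gives $I_+I_-v=I_+^2v=-v=I_-^2v=I_-I_+v$, so again $v\in W$. For directness, a vector satisfying both $I_+v=I_-v$ and $I_+v=-I_-v$ has $I_-v=0$, hence $v=0$ since $I_-$ is invertible.

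The substance is the reverse inclusion, where I would exploit that on $W$ the endomorphisms $I_+$ and $I_-$ commute. Set $A:=I_+I_-$. A short computation from $I_\pm^2=-1$ shows $[A,[I_+,I_-]]=0$, so $A$ preserves $W$; and on $W$ one has $A^2=I_+I_-I_+I_-=I_+^2I_-^2=1$. Thus $A|_W$ is an involution, so $W$ is the direct sum of its $(+1)$- and $(-1)$-eigenspaces $W^{\pm}$ (with explicit projections $v\mapsto\tfrac12(v\pm I_+I_-v)$), and it remains only to identify them. If $Av=v$, applying $I_+$ yields $-I_-v=I_+v$, i.e.\ $(I_++I_-)v=0$; together with the first paragraph this gives $W^{+}=\ker(I_++I_-)$, and the identical computation with the signs reversed gives $W^{-}=\ker(I_+-I_-)$. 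Combining the two paragraphs produces the asserted decomposition.

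I do not expect a genuine obstruction, as the argument is entirely elementary; the two spots that need a touch of care are checking that $A=I_+I_-$ really maps $W$ into itself (which is why one verifies $[A,[I_+,I_-]]=0$ rather than taking it for granted) and checking that the eigenspaces of $A|_W$ are \emph{exactly} $\ker(I_+\pm I_-)$, so that both directions of the equivalence $Av=\pm v\iff(I_+\pm I_-)v=0$ should be confirmed rather than just one.
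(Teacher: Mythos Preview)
Your proof is correct and follows essentially the same strategy as the paper's: after the easy inclusion and directness, the key observation is that $I_+$ and $I_-$ commute on $W=\ker([I_+,I_-])$, and the decomposition into $\ker(I_+\pm I_-)$ then falls out. The paper phrases this last step by complexifying and simultaneously diagonalizing $I_\pm$ (eigenvalues $\pm i$), whereas you stay over $\R$ and diagonalize the involution $A=I_+I_-|_W$; these are two packagings of the same linear-algebraic fact.
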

\begin{proof}
Clearly $\text{ker}(I_++I_-)\cap\text{ker}(I_+-I_-)=0$, and using 
\begin{align*}
[I_+,I_-]=(I_+-I_-)(I_++I_-)=-(I_++I_-)(I_+-I_-)
\end{align*}
we see that $\text{ker}([I_+,I_-])\supset \text{ker}(I_++I_-)\oplus\text{ker}(I_+-I_-)$. As both $I_\pm$ anti-commute with $[I_+,I_-]$, they preserve $\text{ker}([I_+,I_-])$. On this subspace $I_+$ and $I_-$ commute with each other and so they admit a simultaneous eigenspace decomposition, all of whose eigenvalues are $\pm i$. The result follows.  
\end{proof}
\noindent In particular, we have $\text{ker}(Q)=\text{ker}(\pi_{\J_1})\oplus \text{ker}(\pi_{\J_2})$

\section{A flow of bi-Hermitian structures}
\label{17:46}

In the process of blowing up generalized K\"ahler manifolds one encounters metrics which degenerate along the exceptional divisor. To deal with this we introduce a deformation procedure to flow a degenerate structure into a non-degenerate one. The idea behind this flow already appears in \cite{MR2371181} where it was used to describe new examples of generalized K\"ahler manifolds, and it was subsequently used in \cite{MR2861778} for the blow-up procedure. The following definition is intended to capture the situation encountered in the blow-up. 
\begin{defn}\label{14:17:27}
A \textsl{degenerate bi-Hermitian structure} on $M$ is a tuple $(g,I_+,I_-,H)$, where $g$ is a symmetric bilinear form, $I_\pm$ are complex structures and $H$ is a closed $3$--form, such that 
\begin{itemize}
\item[i)] $g$ is positive on $M\backslash E$, where $E\subset M$ is a closed and nowhere dense submanifold, on which $TM^\perp$ has constant rank.
\item[ii)] $I_\pm$ are compatible with $g$ and satisfy the integrability condition $\pm d^c_\pm \omega_\pm=H$.
\item[iii)] The bivector $Q:=-\frac{1}{2}[I_+,I_-]g^{-1}$, defined on $M\backslash E$, extends smoothly over $M$.  
\end{itemize}
\end{defn}
\noindent Since the structure is bi-Hermitian on a dense set, it follows that $Qg=-\frac{1}{2}[I_+,I_-]$ holds everywhere and that $\sigma_\pm:=Q-iI_\pm Q$ is holomorphic Poisson with respect to $I_\pm$. To define the flow, we need the following extra ingredient.
\begin{defn}
A \textsl{potential} for $I_+$ (respectively $I_-$) is a closed $1$--form $\alpha$ defined on an open dense set, such that $X_\alpha:=Q(\alpha)$ and $d^c_+\alpha$ (respectively $d^c_-\alpha$) extend smoothly over $M$. 
\end{defn}
\begin{rem}
The terminology originates from the situation where $\alpha=-df$ for a densely defined function $f$, which is usually referred to as the potential. Although this is the situation in which we are interested, we state the results in this section for general closed $1$--forms. 
\end{rem}
\noindent Let $\alpha$ be a potential for $I_+$. Denote by $\varphi_t$ the flow of $X_\alpha$ and define closed $2$-forms  
\begin{align*}
G^\pm_t:=(\varphi_t)_\ast d^c_\pm \alpha, \ \ \  \ \ \  F^\pm_t:=\int_0^t G^\pm_s ds.
\end{align*}
We will have to be careful with $G^-_t$ and $F^-_t$, since $d^c_-\alpha$ is not assumed to be smooth. The aim of this section is to prove
\begin{thm}
\label{14:31:30} 
Let $(g,I_+,I_-,H)$ be a degenerate bi-Hermitian structure with compact degeneracy submanifold $E$, and let $\alpha$ be a potential for $I_+$ such that $d^c_+\alpha$ has compact support and $I_-^\ast(-d^c_+\alpha)^{1,1}_{I_-}$ is positive on $TM^\perp$. 
Then the tuple $(g_t,I_{+,t},I_{-,t},H_t)$, where 
\begin{align}
\label{16:56:07}
 g_t:=g-I_-^\ast \big(F^+_t\big)^{1,1}_{I_-}, \  I_{+,t}:=(\varphi_t)_\ast I_{+},  \  I_{-,t}:=I_-,  \ H_t:=H+id\Big(\big(F^+_t)^{2,0}_{I_-}-\big(F^+_t)^{0,2}_{I_-}\Big),
\end{align}
forms a bi-Hermitian structure for sufficiently small $t> 0$.
\end{thm}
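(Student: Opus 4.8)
The strategy is to verify the three defining conditions of a bi-Hermitian structure for $(g_t,I_{+,t},I_{-,t},H_t)$ directly from Proposition \ref{11:28:39}: namely (a) $g_t$ is a positive-definite metric, (b) $I_{+,t}$ and $I_{-,t}=I_-$ are integrable and compatible with $g_t$, and (c) the integrability identities $\pm d^c_{\pm,t}\omega_{\pm,t}=H_t$ hold. Since $\varphi_t$ is a diffeomorphism, $I_{+,t}=(\varphi_t)_\ast I_+$ is automatically an integrable complex structure, and $I_-$ is unchanged, so integrability is free. The real content is positivity of $g_t$, the compatibility of \emph{both} complex structures with the deformed metric, and the two $d^c$ equations.

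First I would record the structure of the deformation. Because $\alpha$ is closed, $G^+_t=(\varphi_t)_\ast d^c_+\alpha$ is closed, hence $F^+_t$ is closed; splitting $F^+_t$ into its $I_-$-types, $g_t=g-I_-^\ast(F^+_t)^{1,1}_{I_-}$ is manifestly $I_-$-Hermitian (a $(1,1)$-form twisted by $I_-^\ast$ stays symmetric and $I_-$-compatible), which gives compatibility of $I_-$ with $g_t$ for free. For positivity: on $M\setminus E$ the tensor $d^c_+\alpha$ is smooth with compact support, so $\|F^+_t\|\le t\,\|d^c_+\alpha\|_{\sup}$ is uniformly small, and $g$ is positive and bounded below on the compact support away from $E$; along $E$ one uses the hypothesis that $I_-^\ast(-d^c_+\alpha)^{1,1}_{I_-}$ is positive on $TM^\perp$ together with $F^+_t\approx t\,d^c_+\alpha$ to first order, so the degenerate directions of $g$ get filled in with a positive contribution of size $\sim t$, while on a complement $g$ already dominates. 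A compactness/continuity argument (this is where the compact degeneracy submanifold $E$ and compact support of $d^c_+\alpha$ are used) then yields a uniform $t_0>0$ below which $g_t>0$ on all of $M$. I would also need compatibility of $I_{+,t}$ with $g_t$: push forward everything by $\varphi_{-t}$, so that the claim becomes that $g$ is $I_+$-compatible after subtracting $(\varphi_{-t})^\ast$ of an $I_-^\ast(1,1)_{I_-}$-form — this should follow from a Moser-type identity $(\varphi_t)_\ast g - g = -I_-^\ast(F^+_t)^{1,1}_{I_-}$ interpreted on the $I_+$ side, using that $X_\alpha=Q(\alpha)$ and the holomorphic Poisson property of $\sigma_+$; the key computation is that $\mathcal L_{X_\alpha}$ applied to the generalized-complex spinor for $\J_1$ produces precisely $d^c_+\alpha$ in the relevant type component, so the flow is a $B$-field-type symmetry whose defect is exactly $F^+_t$.

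The $d^c$-equations are the analytic heart. Writing $\omega_{-,t}=g_tI_-$, one has $\omega_{-,t}=\omega_- - (F^+_t)^{1,1}_{I_-}$ up to sign conventions, and since $F^+_t$ is closed, $d\omega_{-,t} = -d(F^+_t)^{1,1}_{I_-} = d\big((F^+_t)^{2,0}_{I_-}+(F^+_t)^{0,2}_{I_-}\big)$; applying $d^c_-$ and using $d^c_-\beta^{1,1}$ versus $d\beta^{2,0}$, $d\beta^{0,2}$ type bookkeeping on the closed form $F^+_t$ gives $-d^c_{-}\omega_{-,t}=H + id\big((F^+_t)^{2,0}_{I_-}-(F^+_t)^{0,2}_{I_-}\big)=H_t$, as desired. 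For the $I_{+,t}$-equation: push forward the original identity $d^c_+\omega_+=H$ by $\varphi_t$ to get $d^c_{+,t}\omega_{+,t}=(\varphi_t)_\ast H$, and then one must check $(\varphi_t)_\ast H = H_t$. Since $H$ is closed and $\frac{d}{dt}(\varphi_t)_\ast H = (\varphi_t)_\ast \mathcal L_{X_\alpha}H = d\big((\varphi_t)_\ast \iota_{X_\alpha}H\big)$, while $\frac{d}{dt}H_t = id\big((G^+_t)^{2,0}_{I_-}-(G^+_t)^{0,2}_{I_-}\big)$, it suffices to identify $(\varphi_t)_\ast\iota_{X_\alpha}H$ with $i\big((G^+_t)^{2,0}_{I_-}-(G^+_t)^{0,2}_{I_-}\big)$ up to a $d$-exact term — i.e. at $t=0$ one needs $\iota_{X_\alpha}H$ and $i\big((d^c_+\alpha)^{2,0}_{I_-}-(d^c_+\alpha)^{0,2}_{I_-}\big)$ to agree modulo exact forms, which is a pointwise identity provable from $\pm d^c_\pm\omega_\pm=H$, $X_\alpha=Q(\alpha)$, $Q=-\tfrac12[I_+,I_-]g^{-1}$, and $\alpha$ closed. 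I expect \emph{this last identification of the three-form}, together with making the positivity estimate uniform across $E$, to be the main obstacles; the remaining steps are type-decomposition bookkeeping and a Moser/flow argument, both routine once the spinorial description of $\mathcal L_{X_\alpha}\J_1$ is in hand.
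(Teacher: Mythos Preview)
Your treatment of the $I_-$ side is correct and matches the paper: $g_t$ is $I_-$-Hermitian by construction, the positivity of $g_t$ follows from a compactness argument of exactly the type you sketch, and the identity $d^cF^{1,1}=id(F^{2,0}-F^{0,2})$ for a closed $2$-form $F$ gives $-d^c_-\omega_{-,t}=H_t$ immediately.

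The genuine gap is on the $I_{+,t}$ side. Your argument for both the compatibility of $I_{+,t}$ with $g_t$ and the equation $d^c_{+,t}\omega_{+,t}=H_t$ rests on the ``Moser-type identity'' $(\varphi_t)_\ast g - g = -I_-^\ast(F^+_t)^{1,1}_{I_-}$, i.e.\ on $(\varphi_t)_\ast g = g_t$. This is false. A direct computation (the paper's Lemma~\ref{13:24:30}) gives
\[
\mcL_{X_\alpha}g = I_-^\ast (d^c_+\alpha)^{1,1}_{I_-} \;-\; I_+^\ast (d^c_-\alpha)^{1,1}_{I_+},
\]
so the flow of $g$ picks up a second term involving $d^c_-\alpha$, which is only densely defined and certainly does not vanish. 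Consequently $\omega_{+,t}\neq(\varphi_t)_\ast\omega_+$, and pushing forward $d^c_+\omega_+=H$ does \emph{not} yield $d^c_{+,t}\omega_{+,t}=(\varphi_t)_\ast H$. Your proposed identification of $\iota_{X_\alpha}H$ with $i\big((d^c_+\alpha)^{2,0}_{I_-}-(d^c_+\alpha)^{0,2}_{I_-}\big)$ modulo exact forms fails for the same reason: the correct formula is $\iota_{X_\alpha}H=\tfrac{1}{2}\big(d^c_+(I_-^\ast\alpha)+d^c_-(I_+^\ast\alpha)\big)$, again with an unavoidable $d^c_-$ contribution.

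The paper repairs this not by computing $(\varphi_t)_\ast g$ but by computing $\varphi_t^\ast g_t$ and $\varphi_t^\ast H_t$ and discovering a hidden symmetry: using Lemma~\ref{13:23:32} (which gives $(\varphi_t)_\ast I_\pm = I_\pm - QF^\pm_t$, proved via the flow of the symmetry $\J_+\alpha$ of the holomorphic-Poisson generalized complex structure $\J_+$, not $\J_1$) together with the two identities of Lemma~\ref{13:24:30}, one finds
\[
\varphi_t^\ast g_t = g + \tfrac{1}{2}\big(I_+^\ast F^-_{-t}-F^-_{-t}I_+\big),\qquad
\varphi_t^\ast H_t = H + \tfrac{1}{2}d\big(I_+^\ast F^-_{-t}+F^-_{-t}I_+\big).
\]
This is precisely the same shape as the original deformation but with the roles of $I_+$ and $I_-$ interchanged and $t\mapsto -t$; the $I_-$-argument then applies verbatim to give compatibility of $I_+$ with $\varphi_t^\ast g_t$ and $d^c_+(\varphi_t^\ast\omega_{+,t})=\varphi_t^\ast H_t$, and one pushes forward again. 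The delicate point, which your proposal does not address, is that $F^-_t$ need not be smooth (only $\alpha$ being a potential for $I_+$ is assumed); the paper has to argue separately that the specific combinations $I_+^\ast F^-_{-t}-F^-_{-t}I_+$ and $d(I_+^\ast F^-_{-t}+F^-_{-t}I_+)$ extend smoothly over $M$.
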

\begin{proof}
\noindent It is clear that $g_t$ is symmetric, $I_{\pm,t}$ are integrable and that $H_t$ is closed for all $t$. By construction $g_t$ is compatible with $I_{-,t}=I_-$. Let us show that $g_t$ is a metric for sufficiently small $t>0$. Choose a relatively compact open neighborhood $V$ of $E$ in $M$ with $\text{supp}(d^c_+\alpha)\subset V$ and pick a $\delta_1>0$ and a relatively compact open set $W$ with $\overline{\varphi_t(V)}\subset W$ for all $t\leq \delta_1$. By construction $F^+_t= 0$ on $M\backslash W$ for $t\leq \delta_1$ and so $g_t=g$ is non-degenerate there. Writing $g_t=g+h_t$, then as $t$ goes to $0$, $h_t/t$ converges to $\dot{h}_0$ which by assumption is positive on $TM^\perp$. For small $\epsilon>0$, $g+\epsilon\dot{h}_0$ is positive on $TM|_E$ and therefore also on $TM|_{\overline{U}}$ for $U\subset W$ a small enough neighborhood of $E$. Hence the same is true for $g+\epsilon h_t/t$, and therefore also for $tg/\epsilon +h_t$, provided $t>0$ is close to zero. If in addition $t\leq \epsilon$ then $tg/\epsilon +h_t\leq g+h_t=g_t$ since $g\geq 0$. In conclusion, there exist a neighborhood $U$ of $E$ in $W$ and a $\delta_2>0$ such that $g_t$ is positive on $U$ for all $0<t\leq \delta_2$. Since $\overline{W}\backslash U$ is compact and $g_0=g$ is non-degenerate on $M\backslash E$ there is a $\delta_3>0$ such that $g_t$ is positive on $\overline{W}\backslash U$ for $0\leq t\leq \delta_3$. Consequently, $g_t$ is a metric for $0<t\leq \text{min}(\delta_1,\delta_2,\delta_3)$. 
\newline
As already observed above, $g_t$ is compatible with $I_{-,t}$ for all $t$. Moreover, since 
%We want to define a flow of degenerate bi-Hermitian structures by fixing $I_-$ and deforming $I_+$ into $I_{+,t}=(\varphi_t)_\ast I_+$. The obvious question is then how to define $g_t$ and $H_t$ in order to keep things integrable. Intuitively, since the flow is generated by a Hamiltonian vector field, $Q$ will be fixed in the flow so we expect that $Qg_t=-\frac{1}{2}[I_{+,t},I_-]$ will continue to hold. The formula for $I_{+,t}$ above then implies
%\begin{align*}
%Qg_t=-\frac{1}{2}[I_++QF_t,I_-]=Q\Big(g+\frac{1}{2}(I_-^\ast F_t-F_tI_-)\Big).
%\end{align*}
%This suggests an obvious definition for the flow of $g_t$:
%\begin{align}
%g_t:=&g+\frac{1}{2}(I_-^\ast F_t-F_tI_-)=g+I_-^\ast (F_t)^{1,1}_{I_-}.
%\label{19:33}
%\end{align}
%Here the subscript reminds us that the decomposition is taken with respect to $I_-$. Written in this form it is obvious that $g_t$ keeps on being compatible with $I_-$ and that 
%\begin{align*}
%\omega_{-,t}=\omega_-+(F_t)^{1,1}_{I_-}. 
%\end{align*}
any closed $2$--form $F$ on a complex manifold satisfies
\begin{align*}
d^c F^{1,1}=id (F^{2,0}-F^{0,2}),
\end{align*} 
we obtain
\begin{align*}
-d^c_{-,t} \omega_{-,t}=-d^c_-(\omega_--(F^+_t)^{1,1}_{I_-})=H+id\big((F^+_t)^{2,0}_{I_-}-(F^+_t)^{0,2}_{I_-}\big)=H_t.
\end{align*}
%there is an obvious candidate for $H_t$ in order to maintain $H_t=-d^c_-\omega_{-,t}$, namely
%\begin{align*}
%H_t:=&H-db_t,
%\end{align*}
%where\footnote{One could also choose to deform the $2$--form $b$ in the bi-Hermitian structure and leaving $H$ fixed.}
%\begin{align*}
%b_t:=&i\big( (F_t)^{2,0}_{I_-}-(F_t)^{0,2}_{I_-} \big)=\frac{1}{2}(I_-^\ast F_t + F_t I_-).
%\end{align*}
Hence all that is left to verify is that $I_{+,t}$ is also compatible with $g_t$ and that $d^c_{+,t}\omega_{+,t}=H_t$. In contrast with the $I_{-,t}$ case this is not immediately obvious, the reason being that the flow seems to treat $I_+$ and $I_-$ on an unequal footing. However, we will now show that if we pull back (\ref{16:56:07}) by $\varphi_t$, we obtain a similar flow but with the roles of $I_+$ and $I_-$ interchanged. We begin by giving an alternative formula for $I_{+,t}$.
\begin{lem} 
\label{13:23:32}
%We have 
%\begin{align*}
${\varphi_t}_\ast I_{\pm}=I_{\pm}-QF^\pm_t$. %and $I_{\pm}^\ast F^\pm_t+F^\pm_t ({\varphi_t}_\ast I_{\pm})=0$.
%\end{align*}
%and similarly for $I_-$. 
\end{lem}
\begin{proof}
Consider the generalized complex structure 
\begin{align*}
\J_+=\begin{pmatrix} I_+ & Q \\ 0 & -I_+^\ast \end{pmatrix}, 
\end{align*}
integrable with respect to the zero $3$--form (c.f. Example \ref{11:37:30}). As $\alpha$ is closed, the generalized vector field $\J_+ \alpha=X_\alpha-I^\ast_+ \alpha$ is a symmetry, which means that $\J_+$ is preserved by its flow\footnote{Although $\J_+ \alpha$ is only densely defined, its associated adjoint action on $\Gamma(\T M)$ depends only on $X_\alpha$ and $d^c_+\alpha$ and is therefore defined everywhere. In particular, the flow is also defined everywhere.} 
\begin{align*}
%\psi_t=\begin{pmatrix} (\varphi_t)_\ast & 0 \\ 0 & (\varphi^{-1}_t)^\ast \end{pmatrix}(\varphi_t)_\ast e^{-\int_0^t \varphi_s^\ast dd^c_+f ds}_\ast, 	
\psi^+_t=e^{F^+_t}_\ast\circ (\varphi_t)_\ast. %\begin{pmatrix} (\varphi_t)_\ast & 0 \\ 0 & (\varphi^{-1}_t)^\ast \end{pmatrix}
\end{align*}
Hence $e^{-F^+_t}_\ast \circ \J_+ \circ e^{F^+_t}_\ast ={\varphi_t}_\ast \J_+$ and in particular ${\varphi_t}_\ast I_{+}=I_{+}-QF^+_t$. For $I_-$ we have to be careful since we do not know whether $F^-_t$ is smooth. We can apply the above argument on the open dense set where $\alpha$ is smooth, if we keep the time parameter small. In particular, we conclude that $\mcL_{X_\alpha}I_-=Q d^c_-\alpha$ holds on the dense set where $\alpha$ is defined. Since $X_\alpha$ is smooth, we learn that $Qd^c_-\alpha$, and therefore also $QG^-_t$ and $QF^-_t$, are smooth. The statement for $I_-$ then follows because it holds at $t=0$, and because both sides have equal time derivatives.
\end{proof}
\noindent In the lemma below we denote by $\nabla$ the Levi-Cevita connection and by $\nabla^\pm$ the connections defined in Proposition \ref{11:28:39}, which are defined on the open set $M\backslash E$ where $g$ is a metric. 
\begin{lem}\label{13:24:30} 
\begin{align}
\label{18:00:36}
\mcL_{X_\alpha}g=&I_-^\ast (d^c_+\alpha)^{1,1}_{I_-}-I_+^\ast (d^c_-\alpha)^{1,1}_{I_+} \\
%\frac{1}{2}\Big( (dd^c_+f)I_--I_-^\ast (dd^c_+f)\Big) -\frac{1}{2}\Big( (dd^c_-f)I_+-I_+^\ast (dd^c_-f)\Big) \\
%\end{align}
%\begin{align}
\iota_{X_\alpha}H=&\frac{1}{2}\big(d^c_+(I_-^\ast\alpha)+d_-^c(I_+^\ast\alpha)\big)
\label{18:21:36}
\end{align} 
%In particular: 
%\begin{align*} \iota_{X_f}H=\frac{1}{2}d (\{I_+^\ast,I_-^\ast\}df)+I_+^\ast (dd^c_-f)_{I_+}^{2,0+0,2}+I_-^\ast (dd^c_+f)_{I_-}^{2,0+0,2} \end{align*}
\end{lem}
\begin{proof} We will verify these expressions on the intersection of $M\backslash E$ with the open dense set where $\alpha$ is defined. Since the left-hand sides of both equations are smooth, this shows that the right-hand sides have smooth extensions over all of $M$. 

\noindent As both sides of (\ref{18:00:36}) are symmetric, it suffices to evaluate them on a pair $(Y,Y)$. Using $\mcL_{g^{-1}\alpha}g(Y,Y)=2\nabla_Y\alpha(Y)$ and $X_\alpha=g^{-1}(\frac{1}{2}[I_+,I_-]^\ast \alpha)$, we obtain 
\begin{align*} 
\mcL_{X_\alpha}g(Y,Y)=&\nabla_Y\big([I_+,I_-]^\ast \alpha\big)(Y)\\
=&\nabla_Y\alpha\big([I_+,I_-]Y\big)+\alpha\big((\nabla_Y[I_+,I_-])Y\big).
%=&\nabla_Y\alpha\big([I_+,I_-]Y\big)+\frac{1}{2}\iota_{g^{-1}\alpha}\iota_{{}_Y}(\{I_+^\ast\bigdot, I_-^\ast\bigdot\}\iota_{{}_Y}H). 
\end{align*}
Using the identities\footnote{Here $I_\pm^\ast$ denotes the action on forms which on a form of degree $(p,q)$ acts by $i(p-q)$.} $d^c_\pm \beta=[d,I_\pm^\ast]\beta$ and $d\beta=(\nabla\beta)^{sk}$ for $\beta\in \Omega^\ast(M)$, together with the fact that $\alpha$ is closed, the right hand side of (\ref{18:00:36}) evaluates to\footnote{Here we use the notation $(+\leftrightarrow -)$ to denote the same term that precedes it but with $\pm$ symbols interchanged.} 
\begin{align}\label{09:17:48}
d(I_+^\ast\alpha)(Y,I_-Y)-(+\leftrightarrow-)=&\nabla_Y\alpha(I_+I_-Y)+\alpha((\nabla_YI_+)I_-Y)-\nabla_{I_-Y}\alpha(I_+Y)\nonumber\\&-\alpha((\nabla_{I_-Y}I_+)Y)-(+\leftrightarrow-)\nonumber \\  
=&\nabla_Y\alpha([I_+,I_-]Y)+\alpha\big((\nabla_Y[I_+,I_-])Y\big)\nonumber\\&+\alpha\Big(I_-(\nabla_YI_+)Y -(\nabla_{I_-Y}I_+)Y-(+\leftrightarrow -)\Big).
\end{align}
Using $\nabla_Y I_\pm=\pm\frac{1}{2}[g^{-1}\iota_YH,I_\pm]$, a tedious but straightforward calculation shows that 
\begin{align}\label{08:54:46}
(I_\mp \nabla_YI_\pm-\nabla_{I_\mp Y}I_\pm)Z=\mp\frac{1}{2}g^{-1}\big(I_\mp^\ast \iota_{{}_{I_\pm Z}}\iota_{{}_Y}+I_\pm^\ast\iota_{{}_Z}\iota_{{}_{I_\mp Y}}+I_\mp^\ast I_\pm^\ast\iota_{{}_Z}\iota_{{}_Y}+\iota_{{}_{I_\pm Z}}\iota_{{}_{I_\mp Y}}	\big)H.
\end{align}
From this we see that the last term in (\ref{09:17:48}) vanishes, proving (\ref{18:00:36}). For (\ref{18:21:36}), we compute
\begin{align*}
d^c_+(I_-^\ast \alpha)(Y,Z)=&d(I_+^\ast I_-^\ast\alpha)(Y,Z)-d(I_-^\ast\alpha)(I_+Y,Z)-d(I_-^\ast\alpha)(Y,I_+Z)\\ =&-\nabla_{I_+Y}\alpha(I_-Z)+\alpha\big(I_-(\nabla_YI_+)Z-(\nabla_{I_+Y}I_-)Z\big)-(Y\leftrightarrow Z).
\end{align*}
Hence, using again (\ref{08:54:46}) and the fact that $\nabla\alpha$ is symmetric, we get  
\begin{align*}
\big(d^c_+(I_-^\ast\alpha)+d_-^c(I_+^\ast\alpha)\big)(Y,Z)=&\alpha\big(I_-(\nabla_YI_+)Z-(\nabla_{I_-Y}I_+)Z+(+\leftrightarrow -)	\big)-(Y\leftrightarrow Z)\\
=&\alpha\big(g^{-1}(I^\ast_+I^\ast_--I^\ast_-I^\ast_+)\iota_Z\iota_YH	\big)\\
=&2(\iota_{X_\alpha}H)(Y,Z),
\end{align*}
proving (\ref{18:21:36}). 
\end{proof}
\noindent From the proof of Lemma \ref{13:23:32} we learned that $Qd^c_-\alpha$ is smooth, and therefore also ${d^c_-\alpha} Q$ by taking adjoints. Equation (\ref{18:00:36}) gives us in addition smoothness of $(d^c_-\alpha)^{1,1}_{I_+}$, and if we apply $d$ to Equation (\ref{18:21:36}) we see (after some rearranging) that $d(I_+^\ast d^c_-\alpha+d^c_-\alpha I_+)$ is also smooth. Combining this with Lemma \ref{13:23:32}, we conclude that both $I_+^\ast F^-_t-F^-_tI_+$ and $d(I_+^\ast F^-_t+F^-_t I_+)$ are smooth, which is what we need to make sense of the following lemma.
%$\mcL_{X_\alpha}H=-\frac{1}{2}d(I_+^\ast\bigdot d^c_-\alpha +I_-^\ast \bigdot d^c_+\alpha) 
\begin{lem}
\label{17:07:45}
Let $(g_t,I_{+,t},I_-,H_t)$ be as in (\ref{16:56:07}). Then 
\begin{align}
\label{17:19:05}
\varphi^\ast_tg_t&=g+\frac{1}{2}\big(I_+^\ast F^-_{-t}-F^-_{-t} I_+\big),%=g+I_+^\ast \big(F^-_{-t}\big)^{1,1}_{I_+}, 
\\ 
\label{12:22:59}
\varphi_t^\ast H_t&=H+\frac{1}{2}d\big(I_+^\ast F^-_{-t} + F^-_{-t} I_+\big).%=H+id\Big( \big(F^-_{-t}\big)^{2,0}_{I_+}-\big(F^-_{-t}\big)^{0,2}_{I_+}\Big).
\end{align}
\end{lem}
\begin{proof}
Both equations hold at $t=0$, so it suffices to show that both sides have the same time derivative. Since $\varphi_t^\ast F_t^\pm=-F^\pm_{-t}$ we have $\varphi_t^\ast g_t=\varphi_t^\ast g+\frac{1}{2}(I_{-,-t}^\ast F^+_{-t}-F^+_{-t}I_{-,-t})$. Using Lemma \ref{13:23:32} and Equation (\ref{18:00:36}) we obtain
\begin{align*}
\frac{d}{dt} (\varphi^\ast_tg_t)=&\varphi_t^\ast(\mcL_{X_\alpha}g)+\frac{1}{2}\big(G^-_{-t}QF^+_{-t}-I_{-,-t}^\ast G^+_{-t}-F^+_{-t}QG^-_{-t}+G^+_{-t}I_{-,-t}\big) \\ 
=&\varphi_t^\ast\Big(	\mcL_{X_\alpha}g+\frac{1}{2}\big( -d^c_-\alpha QF^+_t-I_-^\ast d^c_+\alpha+F^+_tQd^c_-\alpha+d^c_+\alpha I_-				\big)	\Big) \\
=&\frac{1}{2}\varphi_t^\ast\big(-I_{+,t}^\ast d^c_-\alpha +d^c_-\alpha I_{+,t}	\big)\\
=&\frac{1}{2}(-I^\ast_+G^-_{-t}+G^-_{-t}I_+). 
\end{align*}
This equals the time derivative of the right-hand side, thereby proving (\ref{17:19:05}).
%\begin{align*}
%\varphi_t^\ast (\mcL_{X_\alpha}g)+\frac{1}{2}\big(G^-_{-t}QF^+_{-t}-I_{-,-t}^\ast G^+_{-t}-F^+_{-t}QG^-_{-t}+G^+_{-t}I_{-,-t}	\big)\overset{!}{=}-\frac{1}{2}\big(I_+^\ast G^-_{-t}-G^-_{-t}I_+	\big). 
%\end{align*}
For (\ref{12:22:59}), using $\varphi_t^\ast H_t=\varphi_t^\ast H-\frac{1}{2}d(I_{-,-t}^\ast F^+_{-t}+F^+_{-t}I_{-,-t})$, we compute
\begin{align*}
\frac{d}{dt}(\varphi_t^\ast H_t)&=\varphi_t^\ast(\mcL_{X_\alpha}H)-\frac{1}{2}d\big( G^-_{-t}QF^+_{-t} -I_{-,-t}^\ast G^+_{-t}+F^+_{-t}QG^-_{-t}-G^+_{-t}I_{-,-t}	\big)\\
=&\frac{1}{2}\varphi_t^\ast d\big(2\iota_{X_\alpha}H+d^c_-\alpha QF^+_t+I_-^\ast d^c_+\alpha+F^+_tQd^c_-\alpha+d^c_+\alpha I_-	\big).
\end{align*}
We can rewrite (\ref{12:22:59}) as 
\begin{align*}
\iota_{X_\alpha}H=\frac{1}{2}d\big((I^\ast_+I^\ast_-+I^\ast_-I^\ast_+)\alpha\big)-\frac{1}{2}(I_+^\ast d^c_-\alpha+d^c_-\alpha I_++I_-^\ast d^c_+\alpha+d^c_+\alpha I_-),
\end{align*}
and since the first term is closed, we obtain
\begin{align*}
\frac{d}{dt}(\varphi_t^\ast H_t)=\frac{1}{2}\varphi_t^\ast d\big( -I_{+,t}^\ast d^c_-\alpha-d^c_-\alpha I_{+,t}		\big)=-\frac{1}{2}d(I_+^\ast G^-_{-t}+G^-_{-t}I_+),
\end{align*}
which equals the time derivative of the right-hand side of (\ref{12:22:59}), thereby proving it. 
\end{proof}
\noindent From Lemma \ref{17:07:45}, together with the arguments applied before to $I_{-,t}$, it follows that $\varphi_t^\ast I_{+,t}=I_+$ is compatible with $\varphi_t^\ast g_t$, and that 
\begin{align*}
\varphi_t^\ast (d^c_{+,t}\omega_{+,t})=d^c_+((\varphi_t^\ast g_t)I_+)=d^c_+(\omega_++(F^-_{-t})^{1,1}_{I_+})=H+id\big((F^-_{-t})^{2,0}_{I_+}-(F^-_{-t})^{0,2}_{I_+}\big)=\varphi_t^\ast H_t.
\end{align*}
Pushing everything forward again by $\varphi_t$ we obtain the desired compatibility of $I_{+,t}$ with $g_t$ and $H_t$, finishing the proof of Theorem \ref{14:31:30}.
\end{proof}
\begin{rem}
We stated the theorem for potentials for $I_+$ but of course a similar result is true for potentials for $I_-$. In that case we need $I_+^\ast(-d^c_-\alpha)^{1,1}_{I_+}$ to be positive on $TM^\perp$.
\end{rem}
\begin{rem}[\cite{MR2371181}]
\label{09:29:21}
Theorem \ref{14:31:30} is stated for metrics which are almost everywhere non-degenerate. It can however, also be applied to the following situation where $g$ is identically zero. Suppose $(M,I)$ is a compact complex manifold and $\sigma$ a holomorphic Poisson structure with real part $Q$. Setting $g=0$, $I_\pm=I$ and $H=0$, this is a degenerate bi-Hermitian structure in the sense of Definition \ref{14:17:27}, except for the fact that the degeneracy set $E=M$ is no longer nowhere dense. Still, Lemma \ref{13:23:32} only used that $Q$ is the real part of holomorphic Poisson structures $\sigma_\pm$ while Lemma \ref{13:24:30} is trivially true in this case (both sides of both equations are zero). Hence Theorem \ref{14:31:30} still applies in this case, and all we need is a potential $\alpha$ such that $-d^c\alpha$ is positive on $M$. To that end, suppose that $D\subset M$ is a divisor which is Poisson for $\sigma$ and which in addition is positive, i.e.\ the line bundle $\mathcal{O}_X(D)$ is positive. Let $s\in \Gamma(\mathcal{O}_X(D))$ be a holomorphic section which vanishes to first order along $D$ and choose a Hermitian metric $h$ on $\mathcal{O}_X(D)$ such that $iR_h$ is positive, where $R_h$ is the curvature of the unitary connection induced by $h$. Then $\alpha:=-d\log |s|$ is a potential with the desired properties, for $-d^c\alpha=iR_h$ is smooth and positive, while $Q(\alpha)$ is smooth because $D$ is Poisson (c.f. the proof of Theorem \ref{10:31}($i)$). The deformation procedure then gives us a bi-Hermitian structure where $I_+$ and $I_-$ are no longer equal. In \cite{MR2371181} this was applied to find examples of generalized K\"ahler structures on Del Pezzo surfaces, which are complex surfaces whose anticanonical bundle is positive. 
\end{rem}

\section{Blowing up submanifolds}
\label{09:19:55}

\subsection{Blow-ups in generalized complex geometry}
\label{11:06:22}

We briefly summarize the results on blowing up submanifolds in generalized complex geometry as treated in \cite{GCblowups}. The most general notion of blowing up a submanifold $Y\subset M$ involves the notion of \textsl{holomorphic ideal}; an ideal $I_Y\subset C^\infty(M)$ of complex valued smooth functions with $Y$ as its zero set, and which is locally around a point in $Y$ generated by functions $z^1,\ldots, z^l$ that form a submersion to $\C^l$. The \textsl{blow-up} $\pi:\tilde{M}\rightarrow M$ of $Y$ in $M$ is then defined by the following universal property: for any map $f:X\rightarrow M$ for which $f^\ast I_Y$ is a divisor\footnote{By definition this is an ideal with nowhere dense zero set and which locally is generated by a single function.} there is a unique factorization of $f$ through $\pi$. It is shown in \cite{GCblowups} that for every holomorphic ideal there exists a (canonical) blow-up. 

There are two kinds of generalized complex submanifolds\footnote{See Definition \ref{20:22} for the notion of generalized complex submanifold. Also, by convention all our submanifolds are closed in $M$.} that admit a blow-up. Firstly, we have the \textsl{generalized Poisson submanifolds}, by definition these are the submanifolds $Y\subset M$ with $\J N^\ast Y=N^\ast Y$, and they behave in a complex manner in normal directions. It turns out that these submanifolds inherit a canonical holomorphic ideal from the generalized complex structure, and so there is a canonical (differentiable) blow-up. This is however not automatically generalized complex. To phrase the precise condition, observe that a generalized Poisson submanifold is in particular Poisson for $\pi_\J$, and so $N^\ast Y$ inherits a fiberwise Lie algebra structure. Concretely, if $\alpha,\beta\in N^\ast_yY$ and $X\in T_yM$, we have
\begin{align*}
[\alpha,\beta] (X)=(\mcL_{\widetilde{X}}\pi_\J)(\alpha,\beta),
\end{align*}
where $\widetilde{X}$ is an arbitrary local extension of $X$. This Lie bracket turns out to be complex linear, where the complex structure on $N^\ast Y$ is given by the restriction of $\J$. We call $N^\ast Y$ \textsl{degenerate} if the bundle map
\begin{align*}
%\label{15:15:28}
 \Lambda^3 N^\ast Y&\rightarrow Sym^2(N^\ast Y) \\ u\wedge v\wedge w&\mapsto u[v,w]+v[w,u]+w[u,v]
\end{align*} 
vanishes. Here both the exterior and symmetric algebra are taken over $\C$, which is well-defined since $N^\ast Y$ is complex. Note that degeneracy is really a property of Lie algebras, and $N^\ast Y$ being degenerate means that all its fibers are. Another description of degeneracy for a Lie algebra is that the bracket of any two elements lies in the plane spanned by them. Then, the differentiable blow-up of $Y$ in $M$ admits a generalized complex structure for which the blow-down map is holomorphic if and only if $N^\ast Y$ is degenerate. 
\newline
\newline
The second class of submanifolds is formed by the \textsl{generalized Poisson transversals}, defined by the condition $\J N^\ast Y\cap (N^\ast Y)^\perp =0$. They behave symplectically in normal directions and admit a global neighborhood theorem.  If in addition $Y$ is compact, using the neighborhood theorem one can construct a (non-canonical) generalized complex blow-up. To put this blow-up in the context of holomorphic ideals, observe that a submanifold $Y$ equipped with a complex structure on its normal bundle inherits many holomorphic ideals by taking the canonical ideal of fiberwise linear functions on the manifold $NY$, and transporting it into $M$ using a tubular neighborhood. Then the blow-up of a generalized Poisson transversal can be regarded as the blow-up with respect to such an ideal. Since it is non-canonical, the ideal description is not really useful in this context.

\noindent On a generalized K\"ahler manifold the two types of submanifolds are related as follows.
\begin{lem}
Let $(M,\J_1,\J_2)$ be a generalized K\"ahler manifold. A submanifold $Y\subset M$ which is generalized Poisson for $\J_1$ is a generalized Poisson transversal for $\J_2$. 
\end{lem}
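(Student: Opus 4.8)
The plan is to unwind both defining conditions in terms of the conormal bundle $N^\ast Y$ and the Poisson bivectors $\pi_{\J_1}, \pi_{\J_2}$, and then exploit the key identity $\ker(Q) = \ker(\pi_{\J_1}) \oplus \ker(\pi_{\J_2})$ from Lemma \ref{14:37:27}. Recall that $Y$ being generalized Poisson for $\J_1$ means $\J_1 N^\ast Y = N^\ast Y$, while $Y$ being a generalized Poisson transversal for $\J_2$ means $\J_2 N^\ast Y \cap (N^\ast Y)^\perp = 0$. So the goal is to show the first condition implies the second.

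First I would record what $\J_1 N^\ast Y = N^\ast Y$ says at the level of the induced Poisson structure: since $\pi_{\J_1}$ is (up to sign and a factor) the composition $T^\ast M \hookrightarrow \T M \xrightarrow{\J_1} \T M \twoheadrightarrow TM$, invariance of $N^\ast Y$ under $\J_1$ forces $\pi_{\J_1}(N^\ast Y) = 0$, i.e.\ $N^\ast Y \subset \ker(\pi_{\J_1}) = \nu_{\J_1}$. Dually, this also means $N^\ast Y$ is co-isotropic for $\pi_{\J_1}$ in the trivial way. Now invoke $\ker(\pi_{\J_1}) \oplus \ker(\pi_{\J_2}) = \ker(Q)$ together with the formulas $\pi_{\J_1} = -\tfrac12(\omega_+^{-1}-\omega_-^{-1})$, $\pi_{\J_2} = -\tfrac12(\omega_+^{-1}+\omega_-^{-1})$: since these are fibrewise transverse (their sum $-\omega_+^{-1}$ is invertible), one gets $\ker(\pi_{\J_1}) \cap \ker(\pi_{\J_2}) = 0$, so in particular $N^\ast Y \cap \ker(\pi_{\J_2}) = 0$, which already says $\pi_{\J_2}$ is injective on $N^\ast Y$ — the infinitesimal symplectic transversality.

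Next I would translate this into the required statement $\J_2 N^\ast Y \cap (N^\ast Y)^\perp = 0$. Suppose $\alpha \in N^\ast Y$ with $\J_2 \alpha \in (N^\ast Y)^\perp$. Write $\J_2 \alpha = X + \xi$ with $X = \pi_{\J_2}(\alpha)$ (up to the universal constant) the $TM$-component. The condition $\J_2\alpha \perp N^\ast Y$ pairs only the $TM$-part of $\J_2\alpha$ against $N^\ast Y$: $\langle X + \xi, \beta\rangle = \tfrac12 \beta(X) = 0$ for all $\beta \in N^\ast Y$, hence $X = \pi_{\J_2}(\alpha) \in (N^\ast Y)^{\perp_{TM}} = TY$. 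Meanwhile, since $N^\ast Y$ is $\J_1$-invariant and $\J_1,\J_2$ commute with $\J_1^2 = -1$, one shows $\J_2$ maps $N^\ast Y$ into $\J_1$-invariant-compatible data; the cleaner route is: because $N^\ast Y \subset \nu_{\J_1} = T^\ast M \cap \J_1 T^\ast M$ and $\mcG = -\J_1\J_2$ preserves $V_\pm$, one deduces $\J_2 \alpha \in N^\ast Y \oplus TY$-type decomposition forces $\pi_{\J_2}(\alpha) \in TY$ to actually be tangent to the symplectic leaf and orthogonal to it, so $\pi_{\J_2}(\alpha) = 0$; then injectivity of $\pi_{\J_2}$ on $N^\ast Y$ (established above) gives $\alpha = 0$, and then $\J_2\alpha = 0$ as well.

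**Main obstacle.** The delicate point is the last step: showing that $\pi_{\J_2}(\alpha) \in TY$ combined with the structural relations actually forces $\pi_{\J_2}(\alpha)=0$ rather than merely being some nonzero tangent vector. This requires carefully using that $N^\ast Y$ is a \emph{Lagrangian} $\J_1$-invariant subbundle (so it is its own image under the relevant projections) and that $\mcG$ is positive-definite, to rule out isotropic directions surviving. I would handle this by passing to the $V_\pm$ decomposition: $\alpha \in N^\ast Y \subset T^\ast M$ decomposes as $\alpha = v_+ + v_-$ with $v_\pm \in V_\pm$, and $\J_1$-invariance of $N^\ast Y$ plus $\J_2 = \pm\J_1$ on $V_\pm$ lets one compute $\J_2\alpha = \J_1 v_+ - \J_1 v_-$ explicitly; the perpendicularity condition then pins down $v_+, v_-$ via positivity of $\langle\cdot,\cdot\rangle|_{V_+}$ and negativity on $V_-$, forcing $\alpha = 0$. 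This is the only place where genuine care (as opposed to bookkeeping) is needed; everything else is a direct translation between the $\J_i$, $\pi_{\J_i}$, and $V_\pm$ pictures already set up in Section \ref{09:13:21}.
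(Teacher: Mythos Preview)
Your route is far more circuitous than necessary, and the ``main obstacle'' you identify is real: the step from $\pi_{\J_2}(\alpha)\in TY$ to $\pi_{\J_2}(\alpha)=0$ is not justified by anything you wrote. The hand-wave about being ``tangent to the symplectic leaf and orthogonal to it'' has no content --- there is no reason the $\pi_{\J_2}$-leaf through $y$ should be transverse to $TY$ in general, so knowing $\pi_{\J_2}(\alpha)\in TY$ tells you nothing by itself. The preliminary observation that $\ker(\pi_{\J_1})\cap\ker(\pi_{\J_2})=0$ is correct but plays no role in closing that gap.

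The paper's proof is a single line and bypasses all of this: for nonzero $\alpha\in N^\ast Y$, positivity of the generalized metric gives $\langle \J_1\alpha,\J_2\alpha\rangle>0$. Since $\J_1\alpha\in N^\ast Y$ by hypothesis, this exhibits an element of $N^\ast Y$ that pairs nontrivially with $\J_2\alpha$, so $\J_2\alpha\notin (N^\ast Y)^\perp$. Done. Your $V_\pm$ proposal at the end would, if carried out, recover exactly this: writing $\alpha=v_++v_-$ and computing $\langle \J_1\alpha,\J_2\alpha\rangle=\langle v_+,v_+\rangle-\langle v_-,v_-\rangle>0$ is just the statement that $\mcG$ is positive. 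The missing idea in your writeup is simply to test $\J_2\alpha$ against the \emph{specific} element $\J_1\alpha\in N^\ast Y$, rather than against an arbitrary $\beta$; once you do that, the Poisson-theoretic detour through $\pi_{\J_i}$ and Lemma~\ref{14:37:27} becomes entirely superfluous.
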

\begin{proof}
By the generalized K\"ahler condition we have 
\begin{align*}
\langle \J_1\alpha,\J_2\alpha\rangle >0 \ \ \ \forall \alpha\in N^\ast Y.
\end{align*}
So if $\J_1N^\ast Y=N^\ast Y$, then obviously $\J_2N^\ast Y\cap (N^\ast Y)^\perp=0$. 
\end{proof}
\noindent
In particular, if $Y$ is a generalized Poisson submanifold for $\J_1$ such that $N^\ast Y$ is degenerate, then $Y$ can be blown up for both $\J_1$ and $\J_2$, albeit in different manners. It sounds reasonable to expect that there will be a generalized K\"ahler blow-up for $Y$ in $M$. Although a complete answer is still lacking, in the next sections we will give some extra sufficient conditions that will guarantee the existence of a generalized K\"ahler blow-up. 

The strategy will be the following: Given a $Y\subset (M,\J_1,\J_2)$ as above, we blow it up for $\J_1$ and then show that the bi-Hermitian structure lifts to a degenerate bi-Hermitian structure on the blow-up. Then, under additional hypotheses we apply the flow procedure of the previous section which makes the structure non-degenerate, and the result will be the desired blow-up.  
%Let $(M,J_1,J_2)$ be a generalized K\"ahler manifold and let $(g,I_\pm)$ be the associated bi-hermitian structure in metric splitting. Let $Y\subset M$ be a \textsl{generalized Poisson} submanifold, in the sense that $J_1N^\ast Y=N^\ast Y$. In addition we assume that the induced Lie algebra structure on $N^\ast Y$ is \textsl{degenerate} in the sense of Polischchuk. In (??) it is shown that such submanifolds can be blown up in a canonical way to get a new generalized complex manifold. In this section we will prove, under some additional hypotheses on $Y$, that the same is true in the generalized K\"ahler context. The strategy of proof is as follows. We first show that the whole bihermitian structure lifts (uniquely) to the blow up with respect to $J_1$, endowing it with a degenerate bihermitian structure. This will be true in general. The next step consists of finding a suitable function $f$ as in the previous section to flow this degenerate structure into a nondegenerate one. For this last step we will need some extra assumptions on $Y$. 

\subsection{Lifting the bi-Hermitian structure}

Let $Y\subset (M,\J_1,\J_2)$ be a generalized Poisson transversal for $\J_1$. Associated to $(\J_1,\J_2)$ there is the pair of holomorphic Poisson structures $(I_\pm, \sigma_\pm)$ defined in (\ref{15:18:13}), whose real parts equal $Q$. We want to show that $(I_\pm, \sigma_\pm)$ all lift to the generalized complex blow-up of $Y$ with respect to $\J_1$. To that end we will first prove that $\sigma_\pm$ lifts to the blow-up of $Y$ with respect to $I_\pm$, and then show that the three different blow-ups coincide.  

From (\ref{10:42:52}) it follows that $Y$ is generalized Poisson for $\J_1$ if and only if $I_\pm^\ast N^\ast Y=N^\ast Y$ and $I^\ast_+|_{N^\ast Y}=I^\ast_-|_{N^\ast Y}$. Consequently, $Y$ is a complex Poisson submanifold for both $(I_\pm,\sigma_\pm)$.  

\begin{lem}
\label{14:25:15}
$N^\ast Y$ is degenerate for $\pi_{\J_1}$ if and only if it is degenerate for $Q$.
\end{lem}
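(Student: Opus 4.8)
The plan is to identify the two transverse Lie algebra structures carried by $N^*Y$ — the one induced by $\pi_{\J_1}$ and the one induced by $Q$ — and to show they differ only by a $\C$-linear automorphism of $N^*Y$, so that the vanishing conditions defining degeneracy coincide. First I would unpack the hypothesis: since $Y$ is generalized Poisson for $\J_1$, the matrix form (\ref{10:42:52}) (after gauging away $b$) together with (\ref{09:05}) shows that $\J_1 N^*Y=N^*Y$ is equivalent to $\pi_{\J_1}(N^*Y)=0$, $I_\pm^*N^*Y=N^*Y$ and $I_+^*|_{N^*Y}=I_-^*|_{N^*Y}$, and moreover $\J_1|_{N^*Y}=-I_+^*|_{N^*Y}$; in particular $N^*Y\subseteq\ker\pi_{\J_1}$. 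The next point is the identity
\[
Q\;=\;(I_++I_-)\circ\pi_{\J_1}\;=\;\pi_{\J_1}\circ(I_+^*+I_-^*),
\]
which is a short computation from (\ref{09:05}), (\ref{09:06}), the relation $\omega_\pm=gI_\pm$, the $g$-compatibility of $I_\pm$ (equivalently $g^{-1}I_\pm^*=-I_\pm g^{-1}$), and $[I_+,I_-]=(I_+-I_-)(I_++I_-)$. In particular $Q(N^*Y)=0$, so $Y$ is Poisson for $Q$ as well and $N^*Y$ inherits a transverse Lie algebra for $Q$; and if $\J_+$ denotes the generalized complex structure with $\pi_{\J_+}=Q$ appearing in the proof of Lemma \ref{13:23:32}, then $\J_+|_{N^*Y}=-I_+^*|_{N^*Y}=\J_1|_{N^*Y}$, so both notions of degeneracy refer to the same complex structure on $N^*Y$ and $\mathrm{span}_\C$ is unambiguous.

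Next I would compare the brackets directly. For a Poisson structure $P$ with $N^*Y\subseteq\ker P$, the transverse bracket of $\alpha,\beta\in N^*_yY$ is $[\alpha,\beta]_P=d\big(P(\widetilde\alpha,\widetilde\beta)\big)_y$, where $\widetilde\alpha,\widetilde\beta$ are any local extensions of $\alpha,\beta$ to sections of $T^*M$ restricting along $Y$ to sections of $N^*Y$ — this agrees with the formula recorded in Section \ref{11:06:22}, because the two correction terms involving $\mcL_{\widetilde X}$ vanish at $y$ since $\pi$ annihilates $N^*_yY$. Using $Q(\widetilde\alpha,\widetilde\beta)=\langle\pi_{\J_1}(\widetilde\alpha),(I_+^*+I_-^*)\widetilde\beta\rangle$ and the facts that the Hamiltonian field $\pi_{\J_1}(\widetilde\alpha)$ vanishes along $Y$ while $(I_+^*+I_-^*)\widetilde\beta|_Y=2I_+^*(\widetilde\beta|_Y)$ is again a section of $N^*Y$, the Leibniz rule kills every term except the one differentiating $\pi_{\J_1}(\widetilde\alpha)$, giving
\[
[\alpha,\beta]_Q\;=\;2\,[\alpha,\,I_+^*\beta]_{\pi_{\J_1}}\qquad(\alpha,\beta\in N^*_yY).
\]

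Finally, since $I_+^*|_{N^*Y}=-\J_1|_{N^*Y}$ and $[\cdot,\cdot]_{\pi_{\J_1}}$ is complex linear, the right-hand side equals $-2\,\J_1\big([\alpha,\beta]_{\pi_{\J_1}}\big)$; in other words, in the complex vector space $(N^*_yY,\J_1)$ the $Q$-bracket is the $\pi_{\J_1}$-bracket multiplied by the nonzero scalar $-2i$. Hence the two bundle maps $\Lambda^3N^*Y\to\mathrm{Sym}^2N^*Y$ that define degeneracy differ only by the factor $-2i$, so they vanish simultaneously, which is exactly the claim. (One can avoid invoking complex linearity: from the displayed relation, its inverse $[\alpha,\beta]_{\pi_{\J_1}}=-\tfrac12[\alpha,I_+^*\beta]_Q$, and the trivial identity $\mathrm{span}_\C(\alpha,I_+^*\beta)=\mathrm{span}_\C(\alpha,\beta)$, both implications of the lemma follow.)

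The step I expect to be the main obstacle is this middle part: establishing $Q=(I_++I_-)\pi_{\J_1}$ and then carefully tracking, through the intrinsic bracket formula, that passing from $\pi_{\J_1}$ to $Q$ is, on $N^*Y$, just post-composition of the Lie bracket with multiplication by $-2i$ — in particular verifying that all the other terms in the Leibniz expansion genuinely drop out because $\pi_{\J_1}$ annihilates $N^*Y$. The remaining ingredients are linear algebra already available from (\ref{10:42:52}), (\ref{09:05}) and (\ref{09:06}).
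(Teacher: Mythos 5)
Your proof is correct, and its first two thirds coincide with the paper's: both arguments rest on the identity $Q=\pi_{\J_1}\circ(I_++I_-)^\ast$ and on transporting it to the induced brackets on $N^\ast Y$, giving $[\alpha,\beta]_Q=[(I_+^\ast+I_-^\ast)\alpha,\beta]_{\pi_{\J_1}}$ (your version $[\alpha,(I_+^\ast+I_-^\ast)\beta]_{\pi_{\J_1}}$ is the same by skewness). Where you diverge is the concluding step. The paper proves an abstract statement: for any linear map $A$ on a Lie algebra such that $[Au,v]$ is again a skew bracket, degeneracy of $[\cdot,\cdot]$ forces degeneracy of $[A\cdot,\cdot]$; this is done by a case analysis with spans and needs neither complex linearity of the bracket nor any special form of $A$ (the converse then comes from $A|_{N^\ast Y}$ being invertible). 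You instead exploit the specific situation: since $Y$ is generalized Poisson, $I_+^\ast=I_-^\ast=-\J_1$ on $N^\ast Y$, so $A=(I_+^\ast+I_-^\ast)$ acts there as the scalar $-2i$, and by complex bilinearity of the conormal bracket the two brackets are proportional, $[\cdot,\cdot]_Q=-2i\,[\cdot,\cdot]_{\pi_{\J_1}}$, making the equivalence of the two degeneracy conditions immediate and symmetric. Your route is shorter and more transparent, at the price of invoking the complex linearity of $[\cdot,\cdot]_{\pi_{\J_1}}$ on $N^\ast Y$ (a fact the paper states, quoting the blow-up paper) — and your fallback via $\mathrm{span}_\C(\alpha,I_+^\ast\beta)=\mathrm{span}_\C(\alpha,\beta)$ even avoids that; the paper's abstract lemma is more robust, applying to any twist $A$, which is why it never needs to identify $A|_{N^\ast Y}$ as a scalar. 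Both are complete proofs of the lemma.
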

\begin{proof}
From Equations (\ref{09:06}) and (\ref{09:05}) we obtain 
\begin{align}
\label{18:11:34}
Q=\pi_{\J_1}\circ (I_++I_-)^\ast.
\end{align}
The endomorphism $A:=(I_++I_-)^\ast$ restricts to an automorphism of $N^\ast Y$ and we have
\begin{align}
[\alpha,\beta]_Q=d_y\big( Q(\tilde{\alpha},\tilde{\beta})\big)=d_y\big( \pi_{\J_1}(A\tilde{\alpha},\tilde{\beta})\big)= [A\alpha,\beta]_{\pi_{\J_1}}
\label{18:01}
\end{align}
for $\alpha,\beta\in N_y^\ast Y$, where $\tilde{\alpha},\tilde{\beta}$ are smooth local extensions of $\alpha$ and $\beta$. Abstractly, if $(\mfg, [,])$ is a Lie algebra and $A:\mfg \rightarrow \mfg$ a linear map such that $[u,v]_A:=[Au,v]$ is again a Lie bracket\footnote{In fact as the argument shows, the Jacobi identity plays no role here. Hence this is really a statement about skew-symmetric brackets.}, then degeneracy of $[,]$ implies that for $[,]_A$ as well. To prove this, we need to show that $[x,y]_A\in \C\cdot x +\C\cdot y$ for all $x,y\in \mfg$. It suffices to verify this for $x,y$ that are linearly independent. Since $[,]$ is degenerate, there are $\lambda,\mu\in \C$ with $[x,y]_A=[Ax,y]=\lambda Ax+\mu y$. Since $[,]_A$ is skew, we have $[Ax,y]=[Ax,x+y]$, and so there are $\lambda',\mu'\in \C$ with $[x,y]_A=\lambda' Ax+\mu' (x+y)$. Comparing both equations, we see that either $Ax\in \C\cdot x+\C\cdot y$ and hence also $[Ax,y]\in \C\cdot x+ \C\cdot y$,  or $[Ax,y]=\lambda Ax$. Running the same argument with $x$ and $y$ interchanged we see that either $[Ax,y]\in \C\cdot x+\C\cdot y$, or $[Ax,y]=\mu Ay$ for some $\mu\in \C$. If $[Ax,y]$ is nonzero, $Ax$ is proportional to $Ay$ and so $[Ax,y]=0$ by skew symmetry of $[,]_A$, a contradiction. Hence, $[x,y]_A\in \C\cdot x +\C\cdot y$ $\forall x,y\in \mfg$, and so $[,]_A$ is degenerate. \end{proof}
\noindent By a result of Polishchuk \cite{MR1465521} it follows that $Y$ can be blown up for both $(I_\pm,\sigma_\pm)$. Let us denote by $\tilde{M}$ the blow-up with respect to $\J_1$ and by $\tilde{M}_\pm$ the blow-up with respect to $I_\pm$.  
\begin{lem}
The blow-ups $\tilde{M}$, $\tilde{M}_+$ and $\tilde{M}_-$ all coincide.
\end{lem}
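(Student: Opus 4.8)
The plan is to show that the three differentiable blow-ups $\tilde M$, $\tilde M_+$, $\tilde M_-$ are identified by checking that they all arise as blow-ups with respect to the same holomorphic ideal, and then invoking the universal property of the blow-up. Since the blow-up is canonically determined by a holomorphic ideal, it suffices to prove that the holomorphic ideal $I_Y^{\J_1}$ inherited from $\J_1$ and the holomorphic ideals $I_Y^{I_\pm}$ inherited from the complex structures $I_\pm$ (via the complex structure that $I_\pm$ induces on $N^\ast Y$, or equivalently the honest complex submanifold structure of $Y\subset(M,I_\pm)$) coincide as ideals in $C^\infty(M)\otimes\C$, at least in a neighborhood of $Y$.

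First I would recall from (\ref{10:42:52}) the observation already made in the text: $Y$ is generalized Poisson for $\J_1$ exactly when $I_\pm^\ast N^\ast Y = N^\ast Y$ and $I_+^\ast|_{N^\ast Y} = I_-^\ast|_{N^\ast Y}$. The second condition is the crucial one: it says the complex structures that $I_+$ and $I_-$ induce on the conormal bundle $N^\ast Y$ \emph{agree}. The holomorphic ideal attached to a generalized Poisson submanifold is, by construction (see \cite{GCblowups}), generated locally by functions whose differentials at $Y$ span the $(1,0)$-part of $N^\ast Y\otimes\C$ for the relevant transverse complex structure; likewise the holomorphic ideal of the complex submanifold $Y\subset(M,I_\pm)$ is generated locally by the $I_\pm$-holomorphic functions cutting out $Y$, whose differentials at $Y$ span $(N^\ast Y)^{1,0}_{I_\pm}$. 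Since these $(1,0)$-subspaces of $N^\ast Y\otimes\C$ all coincide, the three ideals have the same linearization along $Y$. I would then argue that a holomorphic ideal is determined by its linearization along its zero set together with the integrability built into the definition of ``holomorphic ideal'' — more carefully, that two holomorphic ideals with the same zero set $Y$ and the same first-order jet along $Y$ in fact coincide (one can produce generators of one from generators of the other by a formal/convergent iteration, or appeal to the uniqueness clause in the universal property once one checks that pulling back either ideal under the same test maps gives divisors in the same cases). Applying the universal property then gives canonical isomorphisms $\tilde M \cong \tilde M_+ \cong \tilde M_-$ over $M$.

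The main obstacle I anticipate is making precise the claim that the holomorphic ideals agree beyond first order along $Y$, rather than merely having the same linearization: a priori $I_Y^{\J_1}$ and $I_Y^{I_+}$ could differ by higher-order terms supported on $Y$. The clean way around this is to use the normal form theorems. Near $Y$, Theorem \ref{16:12:53} (the generalized Darboux--Newlander--Nirenberg theorem), or rather its relative version along a generalized Poisson submanifold, puts $\J_1$ into a standard form in which the transverse directions carry the holomorphic Poisson structure $\sigma_+$ (or $\sigma_-$) and the induced complex structure is literally $I_+$ (resp. $I_-$) in those directions; in this model the canonical holomorphic ideal of $\J_1$ is visibly the ideal of transverse holomorphic coordinates, which is exactly the holomorphic ideal used to blow up $(M,I_+)$ along $Y$ (and $(M,I_-)$, since the transverse complex structures coincide). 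Thus in the model the three ideals are literally equal, hence equal in general. I would therefore structure the proof as: (i) reduce to a neighborhood of $Y$; (ii) apply the relative normal form; (iii) read off equality of the three holomorphic ideals in the model; (iv) conclude via the universal property that $\tilde M$, $\tilde M_+$, $\tilde M_-$ are canonically isomorphic as manifolds over $M$.
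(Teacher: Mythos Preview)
Your reduction to comparing holomorphic ideals is exactly the paper's starting point, but the argument you sketch for why the ideals agree has two genuine problems.

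First, a sign issue you overlooked: from (\ref{10:42:52}) the action of $\J_1$ on $N^\ast Y$ is $-I_\pm^\ast$, not $+I_\pm^\ast$. So the $(1,0)$-part of $N^\ast Y$ for $\J_1$ is the $(0,1)$-part for $I_\pm$, and the ideals satisfy $\overline{I_{Y,\J_1}}=I_{Y,I_\pm}$ rather than equality. The paper notes this and observes it is harmless (the blow-up of a conjugate ideal is the same manifold with conjugate divisor), but your claim that the three $(1,0)$-subspaces ``all coincide'' is not correct as stated.

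Second, and more seriously, neither of your two routes to equality of ideals works as written. Route (a), that a holomorphic ideal is determined by its linearization along $Y$, is precisely the nontrivial content that needs proof; it is not a general fact you can cite, and higher-order discrepancies are exactly what must be ruled out. Route (b) invokes a ``relative normal form along a generalized Poisson submanifold'' for $\J_1$ in which the transverse complex structure is ``literally $I_+$''. No such statement is available: Theorem \ref{16:12:53} is pointwise, and more importantly the normal form it produces for $\J_1$ involves some auxiliary holomorphic Poisson structure $(i,\sigma)$ that has nothing to do with $I_\pm$. The complex structures $I_\pm$ come from the full generalized K\"ahler pair $(\J_1,\J_2)$, and a chart adapted to $\J_1$ alone gives no control over them. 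The paper's proof supplies the missing link via the spinor identity (\ref{09:29}), $\bar\rho_1^T\wedge\bar\rho_2=e^{-\frac{1}{8}\sigma_+}\Omega_+$, which in local coordinates becomes (\ref{11:07}) and relates the $\J_1$-chart coordinates $z^i$ to the $I_+$-holomorphic coordinates $u^j$. One then shows $\bar z^i\in I_{Y,I_+}$ by an inductive Taylor-series argument (applying Lie derivatives to (\ref{11:07})), cf.\ the proof of Proposition~2.6 in \cite{GCblowups}. That is the step your outline is missing.
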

\begin{proof}
As discussed in Section \ref{11:06:22} and more precisely explained in \cite{GCblowups}, the blow-up $\tilde{M}$ is constructed from a holomorphic ideal $I_{Y,\J_1}$ that $Y$ inherits from $\J_1$, while the blow-ups $\tilde{M}_\pm$ use the natural holomorphic ideals $I_{Y,I_\pm}$  that $Y$ inherits from being a complex submanifold of $(M,I_\pm)$. It thus suffices to show that these three ideals coincide, which turns out to be true up to a conjugation, i.e.\ $\overline{I_{Y,\J_1}}=I_{Y,I_\pm}$. This is not a problem, for the blow-up of a conjugate ideal is given by the same manifold but with conjugate divisor. Pick a local chart $(\R^{2n-2k},\omega_{st}) \times (\C^k,\sigma)$ for $\J_1$ as provided by Theorem \ref{16:12:53}, in which $Y$ necessarily looks like $W\times Z$ where $W\subset \R^{2n-2k}$ is open and $Z\subset \C^k$ is holomorphic Poisson. If $(x,z)$ are coordinates on this chart in which $Y=\{z^1,\ldots,z^l=0\}$, it is shown in \cite{GCblowups} that the ideal $\langle z^1,\ldots,z^l \rangle$ is independent of the choice of local chart, so they glue together to form (by definition) the ideal $I_{Y,\J_1}$. Let us verify that $\overline{I_{Y,\J_1}}=I_{Y,I_+}$, the case of $I_-$ being similar. Pick a holomorphic chart for $I_+$ with coordinates $u^i$ so that $Y$ is given by $\{u^1,\ldots, u^l=0\}$ and so $I_{Y,I_+}=\langle u^1,\ldots,u^l	\rangle$. As is proven for instance in \cite{MR0212575}, we can verify $ \overline{I_{Y,\J_1}}\subset I_{Y,I_+}$ merely by looking at Taylor series, i.e.\ we need to show that %(cf. the proof of Proposition 2.6 in \cite{GCblowups} for a more detailed explanation) 
\begin{align}
\left. \frac{\partial^m \bar{z}^i}{\partial \bar{u}^{i_1}\ldots \partial \bar{u}^{i_m}}\right |_Y=0 \ \ \ \forall m\geq 0, \ \forall i, i_1,\ldots, i_m\in\{ 1,\ldots, l\}.
\label{10:17} 
\end{align}
For this we use (\ref{09:29}), which now explicitly becomes
\begin{align}
e^{-i\omega_{st}}e^{-\bar{\sigma}}(d\bar{z}^1\ldots d\bar{z}^k)	\wedge \bar{\rho}_2=e^fe^{-\frac{1}{8}\sigma_+}(du^1\ldots du^n),
\label{11:07}
\end{align}
where $e^f$ is some rescaling. One can prove (\ref{10:17}) by induction on $m$ and by applying appropriate Lie derivatives to (\ref{11:07}). For precise details on this part of the argument we refer to the proof of Proposition 2.6 in \cite{GCblowups}, which is almost identical. Hence $ \overline{I_{Y,\J_1}}\subset I_{Y,I_+}$, and since both are holomorphic ideals for $Y$, we obtain $ \overline{I_{Y,\J_1}}= I_{Y,I_+}$, which is what we needed to show.\end{proof}

\subsection{Flowing towards a non-degenerate structure}

Let $Y\subset (M,\J_1,\J_2)$ be a generalized Poisson transversal for $\J_1$ with degenerate normal bundle, and denote by $\pi:\tilde{M}\rightarrow M$ the corresponding blow-up. By Lemma \ref{14:25:15} we know that the complex structures $I_\pm$ lift to $\tilde{M}$, and together with $\pi^\ast g$, $\pi^\ast H$ and the lift of $Q$ they form a degenerate bi-Hermitian structure on $\tilde{M}$. The metric degenerates along the exceptional divisor $E=\pi^{-1}(Y)$, and $T\tilde{M}^\perp$ equals the vertical tangent bundle of the fibration $\pi:E\cong\mbP(NY)\rightarrow Y$. In order to apply the deformation procedure from Section \ref{17:46} we need a suitable potential\footnote{We will consider potentials $\alpha=df$ and also refer to $f$ as the potential, slightly abusing terminology from Section \ref{17:46}.} $f$. This will be based on the following idea (see also \cite{MR2861778}). Consider $M$ and $\tilde{M}$ as complex manifolds with respect to either $I_+$ or $I_-$, and $E$ as a divisor on $\tilde{M}$ with associated holomorphic line bundle $\mathcal{O}_{\tilde{M}}(-E)$. Recall that a Hermitian metric on a holomorphic line bundle induces a unitary connection, whose curvature $R_h$ is of type $(1,1)$. 
\begin{lem}
\label{11:45:57}
If $U$ is any neighborhood of $E$ in $\tilde{M}$, there exists a metric $h$ on $\mathcal{O}_{\tilde{M}}(-E)$ such that $iR_h$ is supported in $U$ and restricts to a positive $(1,1)$--form on $T\tilde{M}^\perp$.
\end{lem}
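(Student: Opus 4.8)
The plan is to build $h$ from a standard local model near $E$ together with a cutoff, and then check the positivity on the vertical bundle $T\tilde M^\perp$ survives the cutoff. Recall that the exceptional divisor sits as $E\cong\mbP(NY)\xrightarrow{\pi} Y$, and that $\mcO_{\tilde M}(-E)$ restricted to $E$ is the fiberwise tautological bundle $\mcO_{\mbP(NY)}(-1)$, whose dual $\mcO_{\mbP(NY)}(1)$ carries a Fubini--Study type metric with positive curvature along the fibers of $\pi$. First I would fix any Hermitian metric $h_0$ on $\mcO_{\tilde M}(-E)$ and a holomorphic section $s$ cutting out $E$ to first order; write $\varphi_0:=-\log|s|^2_{h_0}$, a function smooth away from $E$ with $-i\del\delbar\varphi_0=iR_{h_0}$. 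The curvature $iR_{h_0}$ need not be positive on $T\tilde M^\perp$, but its restriction to $E$ along the fibers differs from the Fubini--Study curvature of $h_0|_E$, which we are free to have chosen positive on $T\tilde M^\perp$; more precisely, one may pick $h_0$ so that $i R_{h_0}|_{T\tilde M^\perp}>0$ along $E$ itself, hence (by continuity and compactness of $E$, using that $\dim T\tilde M^\perp$ is locally constant) also on $T\tilde M^\perp$ over a neighborhood $U_0$ of $E$.

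Next I would localize. Shrinking $U_0$ inside the given $U$, choose a smooth cutoff $\chi$ equal to $1$ on a smaller neighborhood $U_1$ of $E$ and supported in $U_0\subset U$, and set $\varphi:=\chi\varphi_0$, extended by $0$ outside $U_0$; note $\varphi$ is still singular (logarithmically) along $E$, but $i\del\delbar\varphi$ is a globally defined smooth $(1,1)$--form supported in $U_0$. Define the new metric by $h:=e^{-\psi}h_0$ for a suitable smooth real function $\psi$ supported in $U_0$; then $iR_h=iR_{h_0}+i\del\delbar\psi$. The aim is to choose $\psi$ so that $iR_h$ is supported in $U$, which is automatic since both $iR_{h_0}$ (after a preliminary modification away from $E$) and $i\del\delbar\psi$ can be arranged to be supported in $U_0$, and so that $iR_h$ restricts to a positive form on $T\tilde M^\perp$ everywhere. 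On $U_1$ we already have $iR_{h_0}|_{T\tilde M^\perp}>0$, so we only need to correct $iR_{h_0}$ on the collar $U_0\setminus U_1$: there $\varphi_0$ is smooth, so adding a large multiple of $i\del\delbar(\rho\varphi_0)$ for an appropriate bump $\rho$, or more simply absorbing the collar into the support region and making the positive contribution dominate by scaling, yields the claim. The cleanest route is to note that $T\tilde M^\perp$ is a vertical direction along which $i\del\delbar(-\log|s|^2)$ has a definite sign near $E$, so after a $C^0$-small modification of $h_0$ away from $E$ (which does not affect $iR_{h_0}$ near $E$) we may take $\psi=0$ and $h=h_0$ directly, with $U_0$ playing the role of the required neighborhood; shrinking $U_0$ to lie in $U$ finishes the construction.

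Finally I would record why $iR_h$ is genuinely supported in $U$: the curvature of $h_0$ away from $E$ is exact, $iR_{h_0}=i\del\delbar\varphi_0$ with $\varphi_0$ smooth there, so multiplying $h_0$ by $e^{\chi\varphi_0}$ with $\chi$ as above produces a metric whose curvature coincides with $iR_{h_0}$ on $U_1$ and vanishes outside $U_0\subset U$, while the positivity on $T\tilde M^\perp$ is unaffected on $U_1$ and, on $U_0\setminus U_1$, follows because there $\varphi_0$ is smooth and the Hessian of $-\log|s|^2$ is positive in the vertical directions transverse to $E$ (this is the local Fubini--Study computation in the blow-up chart $\C^k$ with $\sigma$, exactly as in Theorem \ref{16:12:53}). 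The main obstacle is this last point: one must verify, in the explicit blow-up coordinates, that the vertical Hessian of the logarithm of the fiber norm is positive definite on $T\tilde M^\perp$, uniformly near $E$, and that this positivity is not destroyed when one patches the local models together over the base $Y$ with a partition of unity — i.e. that the convex combination of fiberwise-positive forms stays positive on the vertical bundle, which holds because positivity on a fixed subbundle is a convex condition.
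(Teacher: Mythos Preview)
Your overall plan matches the paper's: start from a metric whose restriction to $E$ is the dual-tautological (Fubini--Study) metric, then modify it away from $E$ so that the curvature is supported in $U$. There are, however, two concrete errors in the execution.

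First, the cutoff is backwards. With your $\chi$ (equal to $1$ on $U_1$ and $0$ outside $U_0$), near $E$ one has $e^{\chi\varphi_0}h_0=e^{\varphi_0}h_0=|s|^{-2}_{h_0}h_0$, which blows up along $E$ and is not a Hermitian metric. Computing the curvature away from $E$ using $i\del\delbar\varphi_0=iR_{h_0}$ gives $iR_{e^{\chi\varphi_0}h_0}=iR_{h_0}-i\del\delbar(\chi\varphi_0)$, which is $0$ on $U_1\setminus E$ and equals $iR_{h_0}$ outside $U_0$---exactly the opposite of what you state. The correct modification is $h=e^{(1-\chi)\varphi_0}h_0$: then $h=h_0$ on $U_1$ and $iR_h=0$ outside $U_0$.

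Second, you have misidentified $T\tilde M^\perp$. It is the null space of the degenerate pulled-back metric $\pi^\ast g$, which is nonzero only along $E$; there it is the \emph{vertical} tangent bundle of $\pi\colon E=\mbP(NY)\to Y$, hence consists of directions \emph{tangent} to $E$, not transverse. In particular $T\tilde M^\perp\subset T\tilde M|_E$, so the entire discussion of positivity on the collar $U_0\setminus U_1$ is unnecessary: once the cutoff is fixed, $h=h_0$ on a neighborhood of $E$, and the positivity on $T\tilde M^\perp$ is precisely the fiberwise Fubini--Study positivity on each $E_y=\mbP(N_yY)$. Your sentence about ``the Hessian of $-\log|s|^2$ being positive in the vertical directions transverse to $E$'' is thus both misdirected and not needed.

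With these two corrections your argument is essentially the paper's. The paper avoids the section $s$ altogether by choosing a tubular neighborhood $p\colon V\to E$ with $\overline V\subset U$, pulling back the Fubini--Study metric on $\mcO_E(1)\cong\mcO_{\tilde M}(-E)|_E$ to $V$, and interpolating with a flat metric on $\mcO_{\tilde M}(-E)|_{\tilde M\setminus E}$ (which is trivial).
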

\begin{proof}
On $E$ we have the tautological line bundle $\mathcal{O}_E(-1)\subset \pi^\ast NY$ whose fiber over a point $l\in E=\mbP(NY)$ is the corresponding line in $NY$. If we equip $NY$ with a Hermitian metric then this induces one on $\mathcal{O}_E(-1)$ and therefore also on $\mathcal{O}_E(1):=\mathcal{O}_E(-1)^\ast$. Denote the latter by $h'$ and its curvature by $R_{h'}$. If we set $E_y:=\pi^{-1}(y)=\mbP(N_yY)$ for $y\in Y$, then $iR_{h'}|_{E_y}$ equals (a multiple of) the Fubini-Study form\footnote{In fact this is one of the standard ways to define the Fubini-Study metric on projective space.} on $\mbP(N_yY)$. Now $\mathcal{O}_{\tilde{M}}(-E)|_{E}\cong N^\ast E$, the conormal bundle of $E$ in $\tilde{M}$, which in turn equals $\mathcal{O}_E(1)$. We can extend the metric $h'$ on $\mathcal{O}_E(1)$ to a metric on $\mathcal{O}_{\tilde{M}}(-E)$ as follows. Forgetting about the holomorphic structure for a moment, pick a tubular neighborhood $p:V\rightarrow E$ with $\overline{V}\subset U$, and use it to identify $\mathcal{O}_{\tilde{M}}(-E)|_V\cong p^\ast \mathcal{O}_E(1)$. Equip $\mathcal{O}_{\tilde{M}}(-E)|_V$ with the metric $p^\ast h'$ with curvature $p^\ast R_{h'}$, which has the same restriction to all the $E_y$'s as $R_{h'}$ does. On the complement of $E$ the bundle $\mathcal{O}_{\tilde{M}}(-E)$ is trivial so can be given a flat metric $h''$. We let $h$ be equal to $h''$ on $\tilde{M}\backslash U$, $p^\ast h'$ on a neighborhood of $E$ in $V$ and a suitable interpolation in between. Clearly $R_h$ is compactly supported in $U$ and $iR_h|_{E_y}$ is positive. 
\end{proof}
%-----
% In general, for an hermitian metric on a holomorphic vector bundle, $R=\bar{\partial}\partial \log h$, where $h$ is a local matrix for the metric. In particular, if $s$ is a meromorphic non-vanishing section of a holomorphic line bundle, $R=\frac{i}{2}dd^c\log |s|^2$.
%----
\noindent It is a well-known fact that if $s$ is any meromorphic section of $\mathcal{O}_{\tilde{M}}(-E)$ which is not identically zero, then $iR_h=-dd^c\log|s|$. So $f:=-\log |s|$ is a good candidate for a potential. Unfortunately, we can not always guarantee smoothness of the Hamiltonian vector field $Q(df)$. The theorem below gives two situations where $Q(df)$ can be controlled. 
\begin{thm}\label{10:31}
Let $(M,\J_1,\J_2)$ be a generalized K\"ahler manifold and $Y$ a compact generalized Poisson submanifold for $\J_1$ whose conormal bundle is degenerate. Then if one of the following conditions holds, the blow-up $\tilde{M}$ with respect to $\J_1$ has a generalized K\"ahler structure.
\begin{itemize}
\item[i)] ($N^\ast Y, [ , ]_{\pi_{\J_1}})$ is Abelian.    
\item[ii)] $Y\subset D$, where  $D$ is a compact Poisson divisor in $M$ with respect to $(I_+,\sigma_+)$ or $(I_-,\sigma_-)$. 
\end{itemize}  
Moreover, in situation $i)$ the generalized K\"ahler structure on the blow-up agrees with the original structure on the complement of a neighborhood of the exceptional divisor. In situation $ii)$ the same is true, if we make the additional assumption that $\mathcal{O}_M(D)|_Y$ is trivial. In that case it is also not necessary to assume that $D$ is compact.
\end{thm}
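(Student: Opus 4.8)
The plan is to reduce Theorem \ref{10:31} to an application of the deformation procedure of Theorem \ref{14:31:30} (in particular its refinement in Remark \ref{09:29:21}), with the bulk of the work lying in producing a suitable potential $f$ in each of the two cases. By the previous subsections we already know that the bi-Hermitian structure $(g, I_\pm, H)$ lifts to a \emph{degenerate} bi-Hermitian structure on the blow-up $\tilde M$, with degeneracy locus the exceptional divisor $E = \pi^{-1}(Y) \cong \mbP(NY)$, and with $T\tilde M^\perp$ equal to the vertical tangent bundle of $\pi : E \to Y$. By Lemma \ref{11:45:57}, we can pick a Hermitian metric $h$ on $\mathcal{O}_{\tilde M}(-E)$ (viewing $\tilde M$ as a complex manifold via $I_+$, say) whose curvature $iR_h$ is compactly supported near $E$ and positive on $T\tilde M^\perp$. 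Taking a meromorphic section $s$ of $\mathcal{O}_{\tilde M}(-E)$ vanishing to first order along $E$, we set $f := -\log|s|_h$ and $\alpha := df$, so that $-d^c_+\alpha = iR_h$ is smooth, compactly supported, and positive on $T\tilde M^\perp$. This is precisely the positivity hypothesis needed in Theorem \ref{14:31:30}; the only thing left to check is that $\alpha$ is a genuine potential, i.e.\ that the Hamiltonian vector field $X_\alpha = Q(\alpha)$ extends smoothly over $\tilde M$ (it is a priori only defined on $\tilde M \setminus E$, since $\log|s|_h$ blows up along $E$).

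For case $i)$, where $(N^\ast Y, [\,,\,]_{\pi_{\J_1}})$ is Abelian, the geometric content is that the exceptional divisor $E$ is again a generalized Poisson submanifold — equivalently, $Q$ (the lift of the original bivector) is tangent to $E$. The plan is to work in the standard local model of the blow-up: near a point of $E$, $\tilde M$ is covered by charts with coordinates in which $E = \{w = 0\}$ for a single coordinate function $w$, and $s$ can be taken to be $w$ up to a nonvanishing smooth factor, so $f = -\log|w| + (\text{smooth})$. Then $Q(df)$ has a potential singularity coming from the $d(\log|w|) = dw/w + \overline{dw}/\bar w$ term, and one must show the contraction of $Q$ against $dw/w$ stays bounded. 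Since $E$ is Poisson for (the lift of) $Q$, the coefficient of $\partial_w$ in $Q$ vanishes along $\{w=0\}$, hence is divisible by $w$ (and by $\bar w$, using that $Q$ is also Poisson for the conjugate structure, or by using that $Q$ is real), which exactly cancels the pole. One should verify that the Abelian hypothesis on $N^\ast Y$ is what guarantees the lift of $Q$ is Poisson with respect to the exceptional divisor; this can be read off from the local normal form for the blow-up of a Poisson submanifold with degenerate conormal bundle, or by a direct computation with the blow-up of $\sigma_\pm$ (Polishchuk's construction) in which the Abelian condition forces the linear part of $\sigma_\pm$ transverse to $Y$ to vanish. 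Once $X_\alpha$ is smooth, Theorem \ref{14:31:30} applies and yields a bi-Hermitian structure on $\tilde M$ with metric positive everywhere; translating back via the dictionary of Proposition \ref{11:28:39} gives the generalized K\"ahler structure, and since $F_t^+$ (hence all the modifications in (\ref{16:56:07})) are supported near $E$, the new structure agrees with $\pi^\ast$ of the old one away from a neighborhood of $E$.

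For case $ii)$, where $Y \subset D$ for a Poisson divisor $D$ of $(I_+, \sigma_+)$ (say), the idea is instead to borrow the potential from $D$ itself, in the spirit of Remark \ref{09:29:21}. Since $D$ is Poisson for $\sigma_+$, its proper transform $\tilde D \subset \tilde M$ (together, if $Y$ has codimension making $E$ part of the total transform, with a multiple of $E$) is again a Poisson divisor for the lifted structure. Choosing a holomorphic section $s_D$ of $\mathcal{O}_{\tilde M}$ of this divisor and a Hermitian metric $h$, the key point is that $Q(d\log|s_D|_h)$ is smooth precisely because the divisor is Poisson: contracting $Q$ with $ds_D/s_D$ gives a smooth vector field since the Poisson condition says $Q$ has no component pushing out of the divisor with a simple pole. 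One then combines this with the curvature-positivity input from Lemma \ref{11:45:57} — one must arrange the metric so that $iR_h$ is positive on $T\tilde M^\perp$; this is where compactness of $D$ (or the triviality assumption on $\mathcal{O}_M(D)|_Y$) enters, to glue a locally positive curvature form with a flat one away from $E$ while keeping the section meromorphic of the right order along $E$. After this, Theorem \ref{14:31:30} again produces the deformed non-degenerate bi-Hermitian structure. Under the extra hypothesis that $\mathcal{O}_M(D)|_Y$ is trivial one can localize the whole construction near $E$, which both removes the need for $D$ to be compact and ensures the blow-up structure coincides with the pullback structure outside a neighborhood of $E$.

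The main obstacle, in both cases, is establishing smoothness of $Q(df)$ across $E$ — that is, verifying that the geometric hypothesis (Abelian conormal Lie algebra, resp.\ containment in a Poisson divisor) really does make the relevant pole of $Q$ cancel in the local blow-up charts. This requires an honest description of how $Q$ (equivalently $\sigma_\pm$) lifts under the blow-up, which in turn rests on the local normal forms of Theorem \ref{16:12:53} and Polishchuk's blow-up of holomorphic Poisson structures; the remaining steps (checking positivity, invoking Theorem \ref{14:31:30}, and confirming the support statement for the deformation) are comparatively routine.
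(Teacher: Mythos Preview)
Your overall strategy coincides with the paper's: lift to a degenerate bi-Hermitian structure on $\tilde M$, produce a potential $f=-\log|s|$ from a metric on $\mathcal{O}_{\tilde M}(-E)$ (respectively on $\mathcal{O}_{\tilde M}(\tilde D)$ in case $ii)$), check that $Q(df)$ extends smoothly across $E$ using that the relevant divisor is Poisson for the lifted $\sigma_\pm$, and then invoke Theorem~\ref{14:31:30}. The smoothness argument you sketch is essentially the paper's: writing $s=z^{-1}e$ locally with $z$ a defining equation for $E$, one has $Q(df)=\tfrac14\sigma_+(dz/z)+\tfrac14\bar\sigma_+(d\bar z/\bar z)-Q(d\log|e|)$, and Poisson-ness of $E$ forces $\sigma_+(dz)$ to be divisible by $z$. (A cosmetic point: a meromorphic section of $\mathcal{O}_{\tilde M}(-E)$ has a simple \emph{pole}, not a zero, along $E$.)

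There is however one genuine gap. You assert that ``$-d^c_+\alpha=iR_h$ is \ldots\ positive on $T\tilde M^\perp$. This is precisely the positivity hypothesis needed in Theorem~\ref{14:31:30}.'' It is not: the hypothesis of Theorem~\ref{14:31:30} is that $I_-^\ast(-d^c_+\alpha)^{1,1}_{I_-}$ be positive on $T\tilde M^\perp$, i.e.\ one must take the $(1,1)$-part of $iR_h$ \emph{with respect to $I_-$}, not $I_+$. A priori $iR_h$ is $(1,1)$ for $I_+$ only, and its $I_-$--$(1,1)$ component could fail to be positive. The paper closes this gap with a short but essential observation: since $Y$ is generalized Poisson for $\J_1$, one has $I_+^\ast|_{N^\ast Y}=I_-^\ast|_{N^\ast Y}$, so $I_+$ and $I_-$ induce the \emph{same} complex structure on each fibre $E_y=\mbP(N_yY)$. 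Hence on $T\tilde M^\perp=TE_y$ the two $(1,1)$-parts agree and $(I_-^\ast(dd^c_+f)^{1,1}_{I_-})|_{E_y}=dd^c_+f|_{E_y}$ is positive. You should insert this step; without it the appeal to Theorem~\ref{14:31:30} is not justified. In case $ii)$ your outline is correct in spirit but vague on the mechanism: concretely $\tilde D=\pi^\ast D-kE$, one takes the tensor metric $h^{\otimes k}\otimes\pi^\ast h'$ on $\mathcal{O}_{\tilde M}(\tilde D)=\mathcal{O}_{\tilde M}(-kE)\otimes\pi^\ast\mathcal{O}_M(D)$, and positivity on $T\tilde M^\perp$ holds because $\pi^\ast R_{h'}$ vanishes there; the same $I_-$ versus $I_+$ issue arises and is resolved identically.
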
  
\begin{proof} $i)$: Consider $M$ as a complex manifold with respect to, say, $I_+$. From (\ref{18:01}) we see that $[,]_Q$ is Abelian on $N^\ast Y$ and therefore $E\subset \tilde{M}$ is a Poisson submanifold for the lift of $Q$. Consider the potential $f=-\log|s|$, where $s$ is a meromorphic section of $\mathcal{O}_{\tilde{M}}(-E)$ with a simple pole along $E$, and the norm is taken with respect to a metric as in Lemma \ref{11:45:57}. We claim that $Q(df)$ extends smoothly to the whole of $\tilde{M}$. To see this, let $x\in E$ and let $e$ be a local holomorphic section of $\mathcal{O}_{\tilde{M}}(-E)$ with $e(x)\neq 0$. Then $s=\frac{1}{z}e$, where $z$ is a local equation for $E$, hence 
\begin{align*}
Q(df)=\frac{1}{4}\sigma_+\big(\frac{dz}{z}\big)+\frac{1}{4}\bar{\sigma}_+\big(\frac{d\bar{z}}{\bar{z}}\big)-Q(d\log|e|).
\end{align*}
Since $\sigma_+(dz)$ vanishes on $E$, it is divisible by $z$ and we see that $Q(df)$ is indeed smooth. 
We already know that $dd^c_+f$ is smooth and that $dd^c_+ f|_{T\tilde{M}^\perp}$ is positive, but in order to apply Theorem \ref{14:31:30} we need that $(I_-^\ast (dd^c_+f)^{1,1}_{I_-})|_{T\tilde{M}^\perp}$ is positive. However, the complex structure on $E_y$ is induced from $N_yY$ under the isomorphism $E_y=\mbP(N_yY)$. Since $Y$ is generalized Poisson, both $I_+$ and $I_-$ coincide on $NY$ and preserve it. So $E_y$ is a complex submanifold of $\tilde{M}$ with respect to both $I_+$ and $I_-$, with the same induced complex structure. In particular $(I_-^\ast (dd^c_+f)^{1,1}_{I_-})|_{E_y}=dd^c_+f|_{E_y}$ is positive. So Theorem \ref{14:31:30} applies and we obtain a generalized K\"ahler structure by perturbing the structure in a neighborhood of $E$, whose size is controlled by the choice of metric in Lemma \ref{11:45:57} (so in particular can be arbitrarily small). 
\newline
%, but let us prove it here as well. It means that near $Y$ there is a smooth complex curve $C$ in the zero locus of $\sigma_+$ passing through $Y$, so the proper transform $\tilde{C}$ hits the exceptional divisor $E$ in precisely one point and $O(\tilde{C})|_E\cong O_{\mbP^1}(1)$. The bundle $O(\tilde{C})$ carries a holomorphic section $s$ which vanishes to order $1$ along $\tilde{C}$. Pick again a hermitian metric on $O(\tilde{C})$, and let $f=log |s|$. As above, the associated form $dd^c_+f$ and $Q(df)$ are both smooth and so we can again apply the flow procedure. This is however only defined locally, but we can extend $f$ globally by multiplying it with a bump function centered at $Y$. 
$ii)$: Let $\tilde{D}$ denote the proper transform\footnote{By definition this is the closure of $\pi^{-1}(D)\backslash E$ in $\tilde{M}$.} of $D$ on the blow-up $\tilde{M}$. In terms of divisors, $\tilde{D}=\pi^\ast D-kE$ for some $k\in \mathbb{Z}_{>0}$ and so $\mathcal{O}_{\tilde{M}}(\tilde{D})=\mathcal{O}_{\tilde{M}}(-kE)\otimes \pi^\ast \mathcal{O}_M(D)$. Equip $\mathcal{O}_{\tilde{M}}(-kE)=\mathcal{O}_{\tilde{M}}(-E)^{\otimes k}$ with the metric $h^{\otimes k}$, where $h$ is a metric on $\mathcal{O}_{\tilde{M}}(-E)$ as in Lemma \ref{11:45:57}. 
If $h'$ is any metric on $\mathcal{O}_M(D)$, the metric $h^{\otimes k}\otimes \pi^\ast h'$ on $\mathcal{O}_{\tilde{M}}(\tilde{D})$ satisfies $iR_{h^{\otimes k}\otimes \pi^\ast h'}=ikR_h+i\pi^\ast R_{h'}$, which is positive  on $T\tilde{M}^\perp$ since $\pi^\ast R_{h'}$ vanishes there. Let $s$ be a holomorphic section of $\mathcal{O}_{\tilde{M}}(\tilde{D})$ with a simple zero along $\tilde{D}$, and define $f:=-\log |s|$. Then $dd^cf=ikR_h+i\pi^\ast R_{h'}$ is smooth on $\tilde{M}$ and positive on $T\tilde{M}^\perp$, while the same argument as in $i)$ shows that $Q(df)$ is smooth, using the fact that $\tilde{D}$ is Poisson. So again Theorem \ref{14:31:30} applies, but this time the structure is perturbed along $\tilde{D}$ as well so we can not contain the deformation to a neighborhood of $E$. If however we know that $\mathcal{O}_M(D)|_Y$ is trivial, then we can choose $h'$ above to be flat around $Y$ and so $dd^c_+f=ikR_h$ around $E$. If $s'$ is a section of $\mathcal{O}_{\tilde{M}}(kE)$ with a zero of order $k$ along $E$ and $\mathcal{O}_{\tilde{M}}(kE)$ is equipped with the metric dual to $h^{\otimes k}$, we can define $f':=-\rho \cdot \log|s'|$, where $\rho$ is a function which is $0$ near $E$ and $1$ outside of a neighborhood of $E$. Then $f+f'$ still has the property that $Q(df)$ is smooth but in addition satisfies $dd^c(f+f')=0$ on an annulus around $E$. We then apply the deformation procedure only on a neighborhood of $E$, keeping it fixed on an annulus around it, and then glue the result back to the original structure. 
\end{proof}
\begin{rem}\label{13:56:55}
$i)$: Suppose that $M$ is a K\"ahler manifold, seen as a generalized K\"ahler manifold as in Example \ref{10:47:25}, and $Y$ is a complex submanifold regarded as a generalized Poisson submanifold for $\J_1$. Then, since $\pi_{\J_1}=0$, $N^\ast Y$ is Abelian and we are in situation $i)$ of the Theorem. Equation (\ref{16:56:07}) that defines the flow reduces in this equation to simply adding $dd^cf$ to the symplectic form, and this is how one usually produces a K\"ahler metric on the blow-up.
\newline
$ii)$: Let us clarify why we need $\mathcal{O}_M(D)|_Y$ to be trivial if we want to contain the deformation to a neighborhood of $E$ in situation $ii)$. In the first part of the proof we are flowing the structure by the $2$--form $ikR_h+i\pi^\ast R_{h'}$, and in the second part we want to cancel this on an annulus around $E$ by $dd^cf'$, where $f'$ is a smooth function. In particular we need $ikR_h+i\pi^\ast R_{h'}$ to be exact on the annulus, which is automatic for $ikR_h$ since $\mathcal{O}_{\tilde{M}}(kE)|_{\tilde{M}\backslash E}$ is trivial. For $\pi^\ast R_{h'}$ to be exact, we need $R_{h'}$ to be exact\footnote{As $Y$ has complex codimension $2$ or bigger (otherwise the blow-up is trivial), the Gysin sequence shows that the second degree cohomology of an annulus around $Y$ agrees with that of $Y$ itself.} around $Y$, which amounts to $\mathcal{O}_M(D)|_Y$ being trivial around $Y$.

\end{rem}

\noindent Although condition $ii)$ of the theorem is clear as it is stated, it is unclear whether it has any applications. For that reason we state the following
\begin{cor}
Let $(M,\J_1,\J_2)$ be generalized K\"ahler with $\J_1$ generically of symplectic type and $Y$ a compact generalized Poisson submanifold for $\J_1$ with degenerate normal bundle and which is contained in the type change locus\footnote{Since $\J_1$ is generically symplectic, for $Y$ to be generalized Poisson it has to be either an open set in the symplectic locus or fully contained in the type change locus. In the former case there is nothing to blow up. } of $\J_1$. Then the blow-up is generalized K\"ahler.
\end{cor}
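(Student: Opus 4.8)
The plan is to recognise this as a special case of part $(ii)$ of Theorem \ref{10:31}: what has to be produced is a compact Poisson divisor $D\subset M$, with respect to $(I_+,\sigma_+)$ or $(I_-,\sigma_-)$, that contains $Y$. The natural candidate is the type-change locus $D:=\{x\in M\mid \text{type}_x(\J_1)>0\}$ of $\J_1$. Because $Y$ is generalized Poisson for $\J_1$ and is not open, $N^\ast Y\subseteq \nu_{\J_1}=\ker\pi_{\J_1}$ along $Y$, so $\pi_{\J_1}$ is nowhere of full rank along $Y$ and hence $Y\subseteq D$; this is the content of the footnote to the statement. Moreover $D$ is closed, so it is compact as soon as $M$ is (and in any case only a compact piece of $M$ containing $Y$ matters for the construction).

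First I would show that $D$ is a complex divisor for $I_+$. Since $\J_1$ is generically symplectic it has no points of type $1$: by the normal form of Theorem \ref{16:12:53} such a point would have an entire neighbourhood of type $1$, contradicting genericity of the symplectic locus. Hence the smallest positive type that occurs is $2$, and near a generic point of $D$ Theorem \ref{16:12:53} writes $M$ as $(\R^{2n-4},\omega_{st})\times(\C^{2},\sigma)$ with $\sigma=f\,\partial_{z_1}\wedge\partial_{z_2}$, so that $D=\{f=0\}\times\R^{2n-4}$ is locally the zero set of one holomorphic function ($f\not\equiv 0$, for otherwise $D$ would contain an open set). Using (\ref{09:05}), (\ref{09:06}), the identity $Q=\pi_{\J_1}\circ(I_++I_-)^\ast$ from (\ref{18:11:34}) and the equality $\ker Q=\ker\pi_{\J_1}\oplus\ker\pi_{\J_2}$ recorded after Lemma \ref{14:37:27}, one checks that along $Y$ --- in fact along all of $D$ --- the bivector $Q$, and with it the holomorphic Poisson structure $\sigma_+=Q-iI_+Q$ on $(M,I_+)$, drops below its generic rank; that is, $D$ sits inside the degeneracy (anticanonical) locus of $\sigma_+$, which is an $I_+$-complex-analytic divisor because it is the determinantal locus of a holomorphic bivector.

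Finally, by Polishchuk's result \cite{MR1465521} --- the same input that made the blow-up of $Y$ possible in the first place --- the degeneracy loci of a holomorphic Poisson structure are Poisson subvarieties. Taking $D$ (or the ambient anticanonical divisor of $\sigma_+$) as the compact Poisson divisor for $(I_+,\sigma_+)$ that contains $Y$, we are exactly in the situation of Theorem \ref{10:31}$(ii)$, which then yields a generalized K\"ahler structure on the blow-up $\tilde M$; one could run the argument symmetrically with $(I_-,\sigma_-)$. The main obstacle is the middle paragraph: extracting, from the linear-algebraic relations between $\pi_{\J_1}$, $\pi_{\J_2}$ and $Q$ and the local model of Theorem \ref{16:12:53}, the clean geometric assertion that the type-change locus of a generically symplectic generalized complex structure is precisely (a complex divisor inside) the anticanonical locus of the induced holomorphic Poisson structure, together with the compatibility of its complex structure with $I_+$ (equivalently with $I_-$, since $I_+$ and $I_-$ agree on the relevant conormal directions). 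Once that identification is in hand the corollary is an immediate application of Theorem \ref{10:31}$(ii)$.
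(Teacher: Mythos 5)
Your overall strategy is the paper's: place the type--change locus $X_1$ of $\J_1$ inside the degeneracy locus of $Q$ (equivalently of $\sigma_\pm$) and invoke Theorem \ref{10:31}\,$ii)$. But the middle step, which you yourself flag as ``the main obstacle'', is where the actual content lies, and as written it has a genuine gap. Two claims are unjustified. First, $X_1$ is locally the zero set of a holomorphic function only for the transverse complex structure of the Darboux chart of Theorem \ref{16:12:53}, which (as the paper stresses) need not coincide with $I_+$ or $I_-$; so the type--change locus itself cannot simply be taken as the divisor $D$ required by Theorem \ref{10:31}\,$ii)$. Second, and more seriously, the degeneracy locus of $\sigma_+$ is \emph{not} ``an $I_+$--complex-analytic divisor because it is the determinantal locus of a holomorphic bivector'': a determinantal locus is analytic, but in general it has components of complex codimension $\geq 2$. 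It coincides with an anticanonical divisor only when $\sigma_+$ is generically nondegenerate, i.e.\ when $Q$ is generically invertible; here the generic corank of $Q$ is governed by the minimal type of $\J_2$ (via Lemma \ref{14:37:27} and (\ref{09:05})), which is not assumed to vanish. So ``the ambient anticanonical divisor of $\sigma_+$'' need not exist as a divisor, and nothing you wrote guarantees a codimension--one Poisson subvariety containing $Y$.

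The paper closes exactly this gap. It takes $D'$ to be the ($I_\pm$--analytic, Poisson) locus where $Q$ fails to have maximal rank, notes $D'=X_1\cup X_2$, lets $D$ be the union of the codimension--one components of $D'$ (still a Poisson subvariety, a priori possibly empty), and then proves $X_1\subset D$ by a dimension count: near \emph{every} point of type $k$ the chart of Theorem \ref{16:12:53} exhibits $X_1$ as the zero set of the single holomorphic function $\sigma^{k/2}$, so an open dense subset of $X_1$ is a smooth real $(2n-2)$--dimensional submanifold, and such a set cannot be contained in a finite union of analytic subsets of complex codimension at least two; hence $X_1$ must meet, and in fact lie in, the codimension--one part $D$. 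Your observation that $D$ is locally a hypersurface near its ``generic'' points supplies some of this raw material, but you neither justify that generic points of the type--change locus have type exactly $2$ nor carry out the containment argument. Supplying the codimension--one--components device together with the real-dimension argument is what is needed to make your proof complete; once $Y\subset X_1\subset D$ with $D$ a Poisson divisor for $(I_\pm,\sigma_\pm)$, the appeal to Theorem \ref{10:31}\,$ii)$ goes through as you say.
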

\begin{proof} Let $X_1$ be the type change locus for $\J_1$. In a local chart of the form (\ref{13:19}), $X_1$ is given by the vanishing of the holomorphic function $\sigma^{k/2}$ and as such is either empty or a codimension $1$ analytic subset of $\C^n$. We assume $X_1\neq \emptyset$, otherwise the statement is vacuous. Let $D'\subset M$ denote the Poisson subvariety of points where $Q$ does not assume its maximal rank on $M$. By Lemma \ref{14:37:27} and Equation (\ref{09:05}) we have
\begin{align*}
\text{ker}(Q)= \text{ker}(I^\ast_+-I^\ast_-)\oplus \text{ker}(I^\ast_++I^\ast_-)=\text{ker}(\pi_{\J_1})\oplus \text{ker}(\pi_{\J_2}).
\end{align*}
Consequently, $D'=X_1\cup X_2$, where $X_i$ is the set of points where $\pi_{\J_i}$ is not of maximal rank (or equivalently, where $\J_i$ is not of minimal type). Let $D$ be the union of the codimension $1$ components of $D'$. Then $D$ is also a Poisson subvariety which a priori could be empty, but we claim that $X_1\subset D$. Indeed, if $x\in X_1\backslash D$, then a neighborhood $U$ of $x$ in $X_1$ is disjoint from $D$. However, since $U$ is given by the vanishing of a holomorphic function (for a complex structure which need not coincide with either $I_\pm$), an open dense set in $U$ is a smooth submanifold of $M$ of real dimension $2n-2$. But $U\subset D'$ , and a real $2n-2$ dimensional submanifold of a complex manifold can not be contained in a finite union of analytic subsets of complex codimension bigger than $1$. So indeed $X_1\subset D$ and Theorem \ref{10:31} $ii)$ applies.
\end{proof}
\noindent A special case of this corollary is when $Y$ is a point and $M$ is $4$--dimensional. This situation was considered in \cite{MR2861778}, where it was assumed that the type change locus was smooth at the point in question. Note that if the type change locus is not smooth at this point, we are in situation $i)$ of Theorem \ref{10:31} and we can still blow it up. 
\begin{rem}
In \cite{MR2669364} Goto proved that every compact K\"ahler manifold with a holomorphic Poisson structure $\sigma$ has a generalized K\"ahler structure, where $\J_1$ is given by the Poisson deformation of the complex structure and $\J_2$ is a suitable deformation of the symplectic structure. If $Y$ is a complex Poisson submanifold for $\sigma$ whose conormal bundle is degenerate, then $\sigma$ lifts to the complex blow-up of $Y$, which has a K\"ahler metric of its own(cf.\ Remark \ref{13:56:55} $i)$). Applying Goto's theorem again, we see that the blow-up is again generalized K\"ahler and the blow-down map is generalized holomorphic with respect to $\J_1$. This example shows that Theorem \ref{10:31} is not optimal in its assumptions. However, the proof of Goto's theorem relies on the use of Green's operators for finding the right deformation of $\J_2$, and as such is non-local. In fact, since the K\"ahler metric on the blow-up differs from the original metric on a neighborhood of the exceptional divisor, there is a-priori nothing we can say about the relation between $\J_2$ on the blow-up and on the original manifold, even far away from the exceptional divisor. As such, it is not clear how to connect this example with the techniques used to prove Theorem \ref{10:31}.  
\end{rem}

\section{An example: compact Lie groups}
\label{09:21:55}

One source of examples of generalized K\"ahler manifolds is provided by Lie theory. 
\begin{prop}[\cite{gualtieri-2010}] %\textbf{Does this also hold for general even dimensional compact Lie groups???}
Let $G$ be an even dimensional compact %reductive\footnote{A Lie group is \textsl{reductive} if its Lie algebra is the sum of an Abelian and a semi-simple Lie algebra.} 
Lie group. Then $G$ has a generalized K\"ahler structure.
\end{prop}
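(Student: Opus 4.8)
The plan is to exhibit an explicit left-invariant generalized K\"ahler structure, so that everything reduces to linear algebra on the Lie algebra $\mfg$ together with the cohomological identity encoding the Cartan 3-form. First I would fix a maximal torus $T\subset G$ with Lie algebra $\mft$, a system of positive roots, and a bi-invariant metric $g$ on $G$ (which exists by compactness). This gives the root space decomposition $\mfg_\C=\mft_\C\oplus\bigoplus_{\alpha>0}(\mfg_\alpha\oplus\mfg_{-\alpha})$. The bi-invariant metric is closed under the Cartan 3-form $H(X,Y,Z):=g([X,Y],Z)$, which is the relevant background form; one checks $H$ is closed by invariance and the Jacobi identity.

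The key step is to build two left-invariant complex structures $I_+,I_-$ on $G$, both compatible with $g$, and satisfying the integrability conditions of Proposition \ref{11:28:39}$(ii)$, namely $\pm d^c_\pm\omega_\pm=H$. The standard construction (Samelson, Pittie, and in this generalized context Gualtieri \cite{gualtieri-2010}) is to take $I$ defined on $\mft_\C$ by a fixed choice identifying $\mft$ with a complex vector space of half its dimension — this is where the hypothesis that $\dim G$, equivalently $\dim\mft$, is even enters — and on each root space pair $\mfg_\alpha\oplus\mfg_{-\alpha}$ declare $\mfg_\alpha$ to be the $(1,0)$-part. This $I$ is integrable precisely because the positive roots are closed under addition: $[\mfg_\alpha,\mfg_\beta]\subset\mfg_{\alpha+\beta}$. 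One then takes $I_+:=I$ and $I_-:=\pm I$ with a sign twisted by the $\mft$-part — more precisely $I_-$ agrees with $I_+$ on the root spaces but is the opposite complex structure on $\mft$ (this is consistent with the parity statement in the excerpt relating $\text{type}(\J_i)$ to orientations of $I_\pm$). Both are $g$-orthogonal since $g$ is bi-invariant and the root spaces are $g$-orthogonal with $\mfg_\alpha$ isotropic. The integrability equation $\pm d^c_\pm\omega_\pm=H$ is then a computation on $\mfg$: one evaluates $d\omega_\pm$ on triples of root vectors and torus vectors using the Maurer–Cartan formula $d\eta(X,Y)=-\eta([X,Y])$, and matches it against $H$ using that $\omega_\pm=gI_\pm$ together with the compatibility of $g$ with the bracket. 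Having verified $(ii)$, Proposition \ref{11:28:39} promotes $(g,I_+,I_-,H)$ to a genuine generalized K\"ahler structure $(\J_1,\J_2)$ via the formulas \eqref{10:42:52}.

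The main obstacle is the integrability identity $\pm d^c_\pm\omega_\pm = H$: it is not enough that $I_\pm$ are individually integrable and $g$-compatible; the precise normalization of the metric on each root space and on $\mft$, and the sign conventions in $I_-$, must be chosen so that the $d^c$-derivatives of $\omega_+$ and $\omega_-$ land on the \emph{same} 3-form $H$ (up to the sign $\pm$). This forces the metric to be the bi-invariant one (not an arbitrary left-invariant metric) and pins down the scaling; the verification is the Samelson-type computation that the Chevalley–Eilenberg differential of $g I$ reproduces the Cartan 3-form, carried out separately for the root-space components and the torus component. Once this is in place, there is no further obstruction, since positivity of the generalized metric $\mcG=-\J_1\J_2$ follows automatically from $g$ being positive definite and $I_\pm$ being $g$-compatible, as in Example \ref{10:47:25}. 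Finally I would remark that the construction is canonical enough that the maximal torus $T$ becomes a generalized Poisson submanifold for one of the two structures — which is exactly the input needed for the blow-up discussion later in Section \ref{09:21:55}.
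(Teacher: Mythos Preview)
Your overall strategy---bi-invariant metric, Cartan $3$--form $H$, Samelson-type complex structure built from a positive root system, then invoke Proposition~\ref{11:28:39}---is exactly the paper's. But your definition of $I_-$ is wrong, and with it the argument breaks down.

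You take both $I_+$ and $I_-$ to be \emph{left}-invariant, agreeing on the root spaces and differing only by a sign on $\mfa\oplus\mft$. For any left-invariant Samelson complex structure $I$ (i.e.\ $\mfg^{1,0}$ a subalgebra containing $\sum_{\alpha>0}\mfg_\alpha$) compatible with the bi-invariant metric, a direct Chevalley--Eilenberg computation gives $d^c\omega=H$, and the answer is insensitive to the choice of complex structure on the torus part (the $I|_{\mft}$-dependent terms cancel). Hence your $I_+$ and $I_-$ both satisfy $d^c_\pm\omega_\pm=H$, not $d^c_+\omega_+=H=-d^c_-\omega_-$ as required by Proposition~\ref{11:28:39}$(ii)$. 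Equivalently, via $(iii)$: a left-invariant $I_-$ satisfies $\nabla^-I_-=0$ only if $I_-$ commutes with every $\mathrm{ad}(\xi)$, which is impossible on a non-abelian factor. So your construction fails whenever $G$ is not a torus.

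The paper's fix is to take $I_+$ as the left-invariant extension and $I_-$ as the \emph{right}-invariant extension of the \emph{same} $I\in\mathrm{End}(\mfg)$. Then the verification is one line using criterion $(iii)$: the connection $\nabla^+$ is characterised by $\nabla^+\xi^L=0$, so the left-invariant tensor $I_+$ is $\nabla^+$-parallel; symmetrically $\nabla^-I_-=0$. No $d^c$-computation is needed. This also makes your closing remark come out correctly: the locus where $I_+=I_-$ is $\{g:\mathrm{Ad}(g)_\ast I=I\}$, whose identity component is the maximal torus $T$, and that is why $T$ is generalized Poisson for $\J_1$.
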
       
\noindent In order to find suitable submanifolds to blow up, we need to understand these generalized K\"ahler structures in some detail. Let $G$ be any Lie group. An element $\xi\in \mfg$ defines left and right invariant vector fields $\xi^L_g:=d_eL_g(\xi)$ and $\xi^R_g:=d_eR_g(\xi)$, and we have $[\xi^L,\eta^L]=[\xi,\eta]^L$, $[\xi^R,\eta^R]=-[\xi,\eta]^R$. These two trivializations of $TG$ define connections $\nabla^\pm$, characterized by $\nabla^+ \xi^L=0=\nabla^-\xi^R$ for $\xi\in \mfg$. Their torsion is given by 
\begin{align*}
T^+(\xi^L,\eta^L)=-[\xi,\eta]^L, \ \ \ \ \ \ \  T^-(\xi^R,\eta^R)=[\xi,\eta]^R.
\end{align*}
Suppose now that $G$ is compact and let $\langle, \rangle$ be a metric on $\mfg$ which is invariant under the adjoint action. In particular, its left and right invariant extensions over $G$ coincide and we denote this common extension by the same symbol $\langle, \rangle$. The $3$--form on $\mfg$ defined by $H(\xi,\eta,\zeta):=\langle [\xi,\eta],\zeta\rangle$ is also invariant under the adjoint action and so extends to a bi-invariant $3$--form on $G$. From the Jacobi identity it follows that $H$ is closed\footnote{In fact, since $H$ is constant in both trivializations it is parallel with respect to both $\nabla^\pm$ and therefore also for their affine combination $\nabla:=\frac{1}{2}(\nabla^++\nabla^-)$, which is nothing but the Levi-Cevita connection for $\langle, \rangle$.} and we have $\langle T^\pm(X,Y),Z\rangle=\mp H(X,Y,Z)$, so $\nabla^\pm$ coincide with the connections defined in Proposition \ref{11:28:39}.

Since $G$ is compact it is automatically reductive, i.e.\ its Lie algebra splits as $\mfg=\mfa\oplus \mfg'$, with $\mfa$ Abelian and $\mfg'$ semi-simple. Let $\mft$ be a maximal Cartan subalgebra of $\mfg'$ and $\mfg'_\C=\mft_\C\oplus \sum_{\alpha\in R}\mfg'_\alpha$ the associated root space decomposition. Since $G$ is compact the roots are contained in $i\mft\subset\mft^\ast_\C$, hence they come in pairs $\pm\alpha$ and we have $\overline{\mfg'_\alpha}=\mfg'_{-\alpha}$. Consequently, $\text{dim}(\mfg')$\ and $\text{dim}(\mft)$\ have the same parity and since $\mfg$ is even dimensional it follows that $\mfa\oplus\mft$ is even dimensional. Now choose a decomposition $R=R^-\cup R^+$ into positive and negative roots, so that in particular $-R_+=R_-$. We define a complex structure $I$ on $\mfg$ by the decomposition $\mfg_\C=\mfg^{1,0}\oplus \mfg^{0,1}$, where 
\begin{align*}
\mfg^{1,0}=(\mfa\oplus\mft)^{1,0}\oplus \sum_{\alpha\in R^+} \mfg'_\alpha,
\end{align*} 
and $(\mfa\oplus\mft)^{1,0}$ is defined with respect to an arbitrary but fixed complex structure on $\mfa\oplus\mft$, compatible with $\langle,\rangle$. By invariance of the metric, $\mfg_\alpha$ is orthogonal to $\mfg_\beta$ unless $\alpha=-\beta$, hence $I$ is compatible with $\langle, \rangle$. Since the sum of two positive roots is again positive it follows that $[\mfg^{1,0},\mfg^{1,0}]\subset \mfg^{1,0}$, so the complex structures $I_+$ and $I_-$, which are defined by the left respectively right invariant extensions of $I$ over $G$, are integrable. Since they are constant in the two respective trivializations, we have $\nabla^\pm I_\pm=0$, so $(G,\langle, \rangle, I_\pm, H)$ is generalized K\"ahler by Proposition \ref{11:28:39}. 
\newline
\newline
Next we look for generalized Poisson submanifolds for $\J_1$. A natural candidate is given by the complex locus of $\J_1$, i.e.\ the set of points where $I_+=I_-$ (see (\ref{11:28:39})). For $g\in G$ we have $(I_+)_g=(L_g)_\ast I$ and $(I_-)_g=(R_g)_\ast I$, so that $(I_+)_g=(I_-)_g$ if and only if $\text{Ad}(g)_\ast I=I$. This condition defines a subgroup of $G$ whose Lie algebra is given by 
\begin{align*}
\{\xi \in \mfg | \ [\text{ad}(\xi),I]=0 \Leftrightarrow [\xi,\mfg^{1,0}]\subset \mfg^{1,0}\}.
\end{align*}
One readily verifies that this algebra coincides with $\mfa\oplus \mft$,
%Writing $\xi=\xi_0+\sum_{\alpha\in R^+}(\xi_\alpha+\bar{\xi_\alpha})$ where $\xi_0\in \mft$, it is clear that $\xi_0$ preserves $\mfg^{1,0}$, but we claim that the other terms have to be zero. Indeed, let $\beta\in R^+$ be any positive root. Then we know that for $\alpha\in R^+$, $\beta-\alpha$ is never a root unless $\beta=\alpha$, in which case $[\xi_\alpha,\bar{\xi_\alpha}]\in \mft_\C$ is never zero. So we conclude that the above Lie algebra is nothing but $\mft$. 
hence the connected component of the complex locus that contains the identity equals the connected subgroup $T$ whose Lie algebra is $\mfa\oplus \mft$. Thus $T$, or any complex submanifold $Y\subset T$ for that matter, is a generalized Poisson submanifold of $(G,\J_1)$. To blow up $Y$ in $G$ with respect to $\J_1$, we need to understand the induced Lie algebra structure on $N^\ast Y$. Since $Y\subset T$, we have an inclusion of Lie algebras
\begin{align}\label{16:37:46}
N^\ast T|_Y\subset N^\ast Y\subset T^\ast G|_Y.
\end{align}
The action of $T$ on $G$, either from the left or the right, is a symmetry of the whole generalized K\"ahler structure that preserves $T$. In particular, the Lie brackets on $T^\ast_yG$ and $N^\ast_yT$ are independent of $y\in Y$ and we can compute them at $e\in G$. From (\ref{09:05}) we see that 
\begin{align*}
(\pi_{\J_1})_g&=-\frac{1}{2}(R_g)_\ast\big(\text{Ad}(g)_\ast \omega^{-1}-\omega^{-1}\big)  =-\frac{1}{2}(R_g)_\ast \Big( \text{Ad}(g)\circ \omega^{-1}\circ \text{Ad}(g)^\ast -\omega^{-1}\Big),
\end{align*}
where $\omega(\xi,\eta)=\langle I\xi,\eta\rangle$ is the associated Hermitian two--form on $\mfg$. Consequently, 
\begin{align*}
(\mcL_{\zeta^L}\pi_{\J_1})_e=\left.\frac{d}{dt}\right |_{t=0}\big(R_{exp(t\zeta)}^\ast (\pi_{\J_1})\big)_e=-\frac{1}{2} \Big( \text{ad}(\zeta)\circ \omega^{-1}+\omega^{-1} \circ \text{ad}(\zeta)^\ast	\Big)
\end{align*}
for $\zeta\in \mfg$. Let $\xi,\eta\in \mfg$ and denote by $\tilde{\xi},\tilde{\eta}\in \mfg^\ast$ their images under the metric. We have 
\begin{align*}
[\tilde{\xi},\tilde{\eta}]_{\pi_{{\J_1}}}(\zeta)=(\mcL_{\zeta^L}\pi_{\J_1})_e(\tilde{\xi},\tilde{\eta})=\frac{1}{2}\langle [I\xi,\eta]+[\xi,I\eta],\zeta 	\rangle.	
\end{align*}
for all $\zeta\in \mfg$. Hence, using the metric, the bracket $[,]_{\pi_{\J_1}}$ induces the following bracket on $\mfg$:
\begin{align*}
[\xi,\eta]_{{}_1}:=\frac{1}{2} \big([I\xi,\eta]+[\xi,I\eta]\big).
\end{align*}
Write $\xi=\xi'+\sum_\alpha(\xi_\alpha+\overline{\xi_\alpha})$, where $\xi'\in \mfa\oplus \mft$ and $\sum_\alpha(\xi_\alpha+\overline{\xi_\alpha}) \in \sum_\alpha (\mfg_\alpha\oplus\overline{\mfg_\alpha})_\R$, and similarly for $\eta$. Here and below, the summation on $\alpha$ is over all positive roots. Then
\begin{align}
\label{16:10:07}
[\xi,\eta]_{{}_1}=i\sum_{\alpha,\beta}\Big( [\xi_\alpha,\eta_\beta]-[\overline{\xi_\alpha},\overline{\eta_\beta}]
+\beta^{1,0}(\xi')\eta_\beta +\beta^{0,1}(\xi')\overline{\eta_\beta}-\alpha^{1,0}(\eta')\xi_\alpha-\alpha^{0,1}(\eta')\overline{\xi_\alpha}	\Big).
\end{align}
Here we are regarding the roots $\alpha,\beta \in (\mfa_\C\oplus \mft_\C)^\ast$ by extending them trivially over $\mfa_\C$, and define their $(1,0)$ and $(0,1)$ components with respect to $I|_{\mfa\oplus\mft}$. From (\ref{16:37:46}) we see that if $N^\ast_y Y$ is degenerate then so is $N^\ast_yT$, and therefore the whole of $N^\ast T$ by $T$-equivariance. So a necessary condition to blow up anything in $T$ is that $T$ itself can be blown up. This can be seen by restricting (\ref{16:10:07}) to $(T_eT)^\perp=\sum_\alpha (\mfg_\alpha\oplus\overline{\mfg_\alpha})_\R\subset \mfg$. There (\ref{16:10:07}) reduces to 
\begin{align*}
[\xi,\eta]_{{}_1}=i\sum_{\alpha,\beta}\big( [\xi_\alpha,\eta_\beta]-[\overline{\xi_\alpha},\overline{\eta_\beta}]\big).
\end{align*}
Recall that a Lie algebra is degenerate if and only if the bracket of any two elements lies in the plane spanned by them. In particular, as $[\mfg_\alpha,\mfg_\beta]=\mfg_{\alpha+\beta}$ for root decompositions, $N^\ast_eT\cong (T_eT)^\perp$ is degenerate if and only if the sum of any two positive roots is not a root itself. The only root systems satisfying this are products of $A_1$, corresponding to the Lie group $SU(2)$. 
In conclusion, in order to blow up $T$ in $G$ with respect to $\J_1$, we need $G$ to be of the form $G=(S^1)^n\times (S^3)^m$ for $n+m$ even, with $T=(S^1)^n\times (S^1)^m$. We then still have some residual freedom in choosing the complex submanifold $Y\subset T$. Instead of determining precisely all $Y$ for which (\ref{16:10:07}) becomes degenerate let us give an easy example. If $m$ is even we can take the complex structure $I$ on $\mfg$ to be a product complex structure, i.e.\ we can assume that $I$ preserves the decomposition $\mfa\oplus\mft$. Then if $Y$ is of the form $Y'\times (S^1)^m$ with $Y'\subset (S^1)^n$ a complex submanifold, (\ref{16:10:07}) vanishes on $N^\ast Y$, because all roots vanish on $\mfa_\C$. If $m$ is odd, we can choose the complex structure to be a product on $(S^1)^{n-1}\times (S^1\times (S^3)^m)$. Then if $Y=Y'\times (S^1)^{1+m}$ with $Y'\subset (S^1)^{n-1}$ complex, (\ref{16:10:07}) again vanishes on $N^\ast Y$. Note that since in all these cases $N^\ast Y$ is Abelian, Theorem \ref{10:31} $i)$ applies and we obtain 
%From (\ref{16:10:07}) it is clear that if $Y\subset T$ is a complex submanifold of the form $Y'\times (S^1)^m$ then $N^\ast Y$ is Abelian. We can choose the complex structure on $\mfa\oplus \mft$ such that $(S^1)^k\times (S^1)^m\subset T$ is a complex submanifold if $k+m$ is even, hence from Theorem \ref{10:31} we deduce 
\begin{thm}
Let $G=(S^1)^n\times (S^3)^m$, where $n+m$ is even, and let $T=(S^1)^n\times (S^1)^m\subset G$ be a maximal torus. Equip $G$ with a generalized K\"ahler structure as above for which $T$ is generalized Poisson. If either   
\begin{itemize}
\item[i)] $m$ is even and $Y=Y'\times (S^1)^m$ with $Y'\subset (S^1)^n$ complex, 
\end{itemize}
or
\begin{itemize}
\item[ii)] $m$ is odd and $Y=Y'\times (S^1)^{m+1}$ with $Y'\subset (S^1)^{n-1}$ complex,
\end{itemize}
then $Y$ can be blown up to a generalized K\"ahler manifold.
\end{thm}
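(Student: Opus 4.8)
The plan is to reduce the statement to Theorem~\ref{10:31}~$i)$, using the explicit formula for the induced bracket on $N^\ast Y$ obtained just above it. Recall that the generalized K\"ahler structure on $G=(S^1)^n\times(S^3)^m$ is built from a bi-invariant metric $\langle,\rangle$ on $\mfg=\mfa\oplus\mfg'$ (with $\mfa\cong\R^n$ abelian and $\mfg'$ semisimple of type $(A_1)^m$), a choice of positive roots, and an invariant complex structure on $\mfa\oplus\mft\cong\R^n\oplus\R^m$. In case~$i)$, where $m$ and hence $n$ are even, I would take $I$ to be a product complex structure on $\R^n\oplus\R^m$; in case~$ii)$, where $m$ and hence $n$ are odd, I would take $I$ to be a product on $\R^{n-1}\oplus(\R\oplus\R^m)$. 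With these choices the factor of $T=(S^1)^n\times(S^1)^m$ containing $Y'$ is a complex subtorus of $T$, so $Y$ is a complex submanifold of $T$, hence --- as already observed in this section --- a generalized Poisson submanifold of $(G,\J_1)$; moreover $Y$ is compact, being closed in $G$. By the discussion following~(\ref{16:37:46}) the bracket on $N^\ast Y$ is $T$--invariant, so it may be computed at the identity via formula~(\ref{16:10:07}).

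The heart of the argument is to show that $(N^\ast Y,[,]_{\pi_{\J_1}})$ is Abelian. Writing $\xi=\xi'+\sum_\alpha(\xi_\alpha+\overline{\xi_\alpha})$ with $\xi'\in\mfa\oplus\mft$, the point is that for $\xi\in N^\ast Y$ the component $\xi'$ lies in the complex subtorus direction carrying $Y'$, on which every positive root vanishes. Hence all of the root-pairing terms in~(\ref{16:10:07}) drop out and $[\xi,\eta]_{{}_1}=i\sum_{\alpha,\beta}\big([\xi_\alpha,\eta_\beta]-[\overline{\xi_\alpha},\overline{\eta_\beta}]\big)$. Since the root system is $(A_1)^m$, no sum of two positive roots is a root, so $[\mfg_\alpha,\mfg_\beta]=0$ for all positive $\alpha,\beta$ and the bracket vanishes identically. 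Thus $N^\ast Y$ is Abelian (in particular degenerate), so Theorem~\ref{10:31}~$i)$ applies and endows the blow-up $\pi:\tilde G\to G$ of $Y$ with respect to $\J_1$ with a generalized K\"ahler structure, which moreover agrees with the original one outside an arbitrarily small neighborhood of the exceptional divisor.

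The only real work is the bookkeeping in the second paragraph: one must ensure the product decomposition of $I$ is adapted to the placement of $Y'$ so that $\xi'$ genuinely lands in the common kernel of all the roots, and one must verify that the surviving terms $[\mfg_\alpha,\mfg_\beta]$ vanish, which is precisely where the hypothesis $G=(S^1)^n\times(S^3)^m$ (equivalently, root system $(A_1)^m$) is used. Everything else is immediate from the constructions preceding the statement, or is a direct citation of Theorem~\ref{10:31}~$i)$.
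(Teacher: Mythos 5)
Your proposal is correct and follows essentially the same route as the paper: choose the product complex structure adapted to the case at hand, observe that for $\xi,\eta\in N^\ast Y$ the components $\xi',\eta'$ lie in $\mfa_\C$ where all roots vanish, and that $[\mfg_\alpha,\mfg_\beta]=0$ for the root system $(A_1)^m$, so (\ref{16:10:07}) shows $N^\ast Y$ is Abelian and Theorem \ref{10:31}~$i)$ applies. The only difference is that you spell out explicitly the vanishing of the $[\mfg_\alpha,\mfg_\beta]$ terms, which the paper leaves implicit in its earlier discussion of when $T$ itself can be blown up.
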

%\begin{rem}
%Note that such a $Y$ can still be quite general. For example, any curve of genus bigger than $0$ embeds into a torus (via the Jacobian).
%\end{rem}

\bibliographystyle{hyperamsplain}
\bibliography{references}

\end{document}